\theoremstyle{plain}
\newtheorem{theorem}{Theorem}
\newtheorem{lemma}[theorem]{Lemma}
\newtheorem{proposition}[theorem]{Proposition}
\theoremstyle{definition}
\newtheorem{example}{Example}
\theoremstyle{remark}
\newtheorem{remark}{Remark}
\numberwithin{equation}{section}
\renewcommand{\leq}{\leqslant}
\renewcommand{\geq}{\geqslant}
\begin{document}

\title[Bilinear embedding in Orlicz spaces]{Bilinear embedding in Orlicz spaces for divergence-form operators with complex coefficients}

\author[V. Kova\v{c}]{Vjekoslav Kova\v{c}}
\address{Vjekoslav Kova\v{c}, Department of Mathematics, Faculty of Science, University of Zagreb, Bijeni\v{c}ka cesta 30, 10000 Zagreb, Croatia}
\email{vjekovac@math.hr}

\author[K. A. \v{S}kreb]{Kristina Ana \v{S}kreb}
\address{Kristina Ana \v{S}kreb, Faculty of Civil Engineering, University of Zagreb, Fra Andrije Ka\v{c}i\'{c}a Mio\v{s}i\'{c}a 26, 10000 Zagreb, Croatia}
\email{kskreb@grad.hr}

\subjclass[2020]{Primary
42B37; %Harmonic analysis and PDEs
Secondary
35J15, %Second-order elliptic equations
47D06} %One-parameter semigroups and linear evolution equations

%\keywords{Bellman function, heat flow, operator semigroup, Orlicz space, partial differential operator, $p$-ellipticity}

\begin{abstract}
We prove a bi-sublinear embedding for semigroups generated by non-smooth complex-coefficient elliptic operators in divergence form and for certain mutually dual pairs of Orlicz-space norms. This generalizes a result by Carbonaro and Dragi\v{c}evi\'{c} from power functions to more general Young functions that still behave like powers. To achieve this, we generalize a Bellman function constructed by Nazarov and Treil.
\end{abstract}

\maketitle

%%%%%%%%%%%%%%%%%%%%%%%%%%%%%%%%%%%%%%%%%%%%%%%%

\section{Introduction}
One is often lead to study bi-sublinear estimates of the form
\begin{equation}\label{eq:oldbilinearemb}
\int_{0}^{\infty} \int_{\mathbb{R}^d} \big|\nabla T_t f(x)\big| \,\big|\nabla \widetilde{T}_t g(x)\big| \,\textup{d}x \,\textup{d}t \leqslant C \|f\| \|g\|^{\ast},
\end{equation}
where $f,g$ are complex functions, $\|\cdot\|$, $\|\cdot\|^{\ast}$ are mutually dual Banach space norms, and $(T_t)_{t>0}$, $(\widetilde{T}_t)_{t>0}$ are operator semigroups. Here, $|\cdot|$ simply denotes the standard (i.e., Euclidean) norm on $\mathbb{C}^d$ and we emphasize that the constant $C$ depends on the norms and the semigroups, but not on the functions. Inequalities \eqref{eq:oldbilinearemb} are often called \emph{bilinear embeddings} (even though they are only bi-sublinear) and they are highly prized in the literature.
As early examples, Petermichl and Volberg \cite{PetermichlVolberg02} and Nazarov and Volberg \cite{NazarovVolberg03} studied such embeddings in the context of bounds for the Ahlfors--Beurling operator. Dragi{\v{c}}evi\'{c} and Volberg \cite{DragicevicVolberg05,DragicevicVolberg06,DragicevicVolberg11,DragicevicVolberg12} established a series of dimension-free estimates of type \eqref{eq:oldbilinearemb} in versatile analytical contexts, for both classical and fairly general semigroups. More recently, Carbonaro and Dragi\v{c}evi\'{c} proved several bilinear embeddings of type \eqref{eq:oldbilinearemb} and used them to study bounds for the Riesz transforms associated with Riemannian manifolds \cite{CarbonaroDragicevic13}, extend the functional calculus for generators of symmetric contraction semigroups \cite{CarbonaroDragicevic17}, and shed a new light on properties of semigroups associated with divergence-form operators with complex coefficients \cite{CarbonaroDragicevic20}.
We have not attempted to list all existing literature as bilinear embeddings are the topic of much recent and ongoing research.

In this paper we study bilinear embeddings \eqref{eq:oldbilinearemb} on the Orlicz function spaces for semigroups generated by elliptic operators with bounded measurable complex coefficients; see Subsection~\ref{subsec:formulationresults} for precise formulation of the result. In particular, we reprove and generalize the main result from \cite{CarbonaroDragicevic20}, which was concerned with $\textup{L}^p$ norms only.
In more detail, but still briefly, the following notions characterize our setting and approach.

\begin{enumerate}[(i)]

\item\label{item:setting1}
\emph{Non-smooth complex divergence-form operators} will be discussed in Subsection~\ref{subsec:divform}. Numerous results that hold for real divergence-form operators generally fail for their complex counterparts. Thus, it is an active line of research to give sufficient conditions for the corresponding estimates in the complex case; see \cite{CialdeaMazya05,CarbonaroDragicevic13, CarbonaroDragicevic17,CarbonaroDragicevic19,CarbonaroDragicevic20,CarbonaroDragicevic20b,CialdeaMazya21,CarbonaroDragicevic19b,CarbonaroDragicevicKovacSkreb21,CialdeaMazya21b}.
The notion of $p$-ellipticity, introduced by Carbonaro and Dragi\v{c}evi\'{c} in \cite{CarbonaroDragicevic20} and reviewed in our Subsection~\ref{subsec:divform}, proved to be useful in relation with $\textup{L}^p$ estimates, as it provides a gradation of assumptions stretched between real ellipticity and (complex) ellipticity.
Interesting aspects of the theory also happen on domains $\Omega\subseteq\mathbb{R}^d$ (see \cite{CarbonaroDragicevic20b,CarbonaroDragicevic19b,CarbonaroDragicevicKovacSkreb21}), but here we choose to work exclusively on $\mathbb{R}^d$, which avoids numerous technical complications.

\item
In all of the aforementioned papers, the norms $\|\cdot\|$ and $\|\cdot\|^{\ast}$ are just the (unweighted or weighted) $\textup{L}^p$ norms.
\emph{Young functions} and \emph{Orlicz spaces} will be reviewed in Subsection~\ref{subsec:Orlicz}. One way of thinking about those spaces is as both providing a refinement of the scale of $\textup{L}^p$ spaces and offering substitutes for the missing endpoint estimates.
For these reasons, the Orlicz norms frequently appear in harmonic analysis, but it seems that, so far, no Orlicz-space estimates have been studied in the context of \eqref{eq:oldbilinearemb} and semigroups generated by operators from \eqref{item:setting1}.
Related ``functional'' estimates for complex divergence-form operators have recently been discussed by Cialdea and Maz'ya \cite{CialdeaMazya21,CialdeaMazya21b}, in the context of certain generalized dissipativity of operators from \eqref{item:setting1}, and we also find \cite{CialdeaMazya21} motivating for the setting of the present paper.

\item
Our results will be established via the \emph{heat flow method}, a particular case of the \emph{Bellman function technique}. This is certainly not surprising, as the proofs of all aforementioned $\textup{L}^p$ bilinear embeddings proceeded precisely this way. In fact, we will closely follow the basic outline from \cite{CarbonaroDragicevic20}. However, each of the papers by Carbonaro and Dragi\v{c}evi\'{c} \cite{CarbonaroDragicevic13, CarbonaroDragicevic17,CarbonaroDragicevic19,CarbonaroDragicevic20,CarbonaroDragicevic20b,CarbonaroDragicevic19b} used (slight variants of) the Bellman function constructed by Nazarov and Treil \cite{NazarovTreil96}, while here we need to construct a Bellman function tailored to a pair of complementary Young functions (see the definition in Subsection~\ref{subsec:Orlicz}), which generalizes the Nazarov--Treil Bellman function. We provide one such function in Section~\ref{sec:Bellmanfn}. This might be an interesting result on its own, as most of this paper is dedicated to verification of the numerous required properties of the constructed function, such as the \emph{generalized convexity} introduced in \cite{CarbonaroDragicevic20} and discussed in our Lemma~\ref{lm:Xlower} below. We also believe that this construction could find further applications in loosely related contexts.
Very few papers construct Bellman functions to prove Orlicz-space estimates on $\mathbb{R}^d$; see \cite{TreilVolberg16} for an example.

\end{enumerate}

The study of generalized convexity for more general Young functions, in connection with Bellman functions and Orlicz-space estimates, was suggested by Alexander Volberg in the summer of 2016; this has been communicated to us by Oliver Dragi\v{c}evi\'{c}.

Structure of the present paper is as follows.
Section~\ref{sec:formulation} recalls the basic definitions and clarifies the lengthy assumptions needed later, regarding both the Young functions (Subsection~\ref{subsec:Orlicz}) and divergence-form operators (Subsection~\ref{subsec:divform}). Then it proceeds with formulation of the main result, namely Theorem~\ref{thm:mainthm}, and gives numerous remarks on its applicability (Subsection~\ref{subsec:formulationresults}).
Section~\ref{sec:Hessians} recalls the concept of a generalized Hessian from \cite{CarbonaroDragicevic20} and computes two expressions associated with rather general nonlinear functions.
Section~\ref{sec:Bellmanfn} is the heart of the paper. It constructs the Bellman function \eqref{eq:mainBellman} corresponding to the studied problem and proves a series of its delicate properties needed in the proof of the main theorem.
Section~\ref{sec:proofofthm} completes the proof of Theorem~\ref{thm:mainthm} by closely following the scheme from \cite{CarbonaroDragicevic20}.

\section{Formulation of the main result}
\label{sec:formulation}
\subsection{Young functions and Orlicz spaces}
\label{subsec:Orlicz}
We only review the basic definitions; more details can be found in the books \cite{RaoRen91,HarjulehtoHasto19}.
Let $\Phi\colon[0,\infty)\to[0,\infty)$ be a \emph{Young function}, i.e.,
\begin{equation}\label{eq:defYoung}
\Phi\text{ is convex},\quad \Phi(0)=0,\quad \lim_{s\to0+}\frac{\Phi(s)}{s}=0,\quad\text{and } \lim_{s\to\infty}\frac{\Phi(s)}{s}=\infty.
\end{equation}
Let $\Phi_{\ast}\colon[0,\infty)\to[0,\infty)$ be the \emph{complementary (or conjugate) Young function} to $\Phi$, defined as
\begin{equation}\label{eq:complementary}
\Phi_{\ast}(t) := \sup_{s\in(0,\infty)} \big(st - \Phi(s)\big)  = \int_{0}^{t} (\Phi')^{-1}(r) \,\textup{d}r,
\end{equation}
where the integral expression for $\Phi_{\ast}$ can be used in special cases when $(\Phi')^{-1}$ is well-defined on $(0,\infty)$.
This definition ensures that \emph{Young's inequality} holds:
\begin{equation}\label{eq:Youngsineq}
s t \leqslant \Phi(s) + \Phi_{\ast}(t) \quad\text{for } s,t\in[0,\infty).
\end{equation}
The Orlicz-space \emph{Luxemburg norm} $\|\cdot\|_\Phi$ is defined for (classes of a.e.\@ equal) measurable complex functions $f$ on $\mathbb{R}^d$ as
\begin{equation}\label{eq:Luxnorms}
\|f\|_\Phi := \inf\Big\{ \alpha\in(0,\infty) : \int_{\mathbb{R}^d} \Phi\Big(\frac{|f(x)|}{\alpha}\Big) \,\textup{d}x \leqslant 1 \Big\}.
\end{equation}

\begin{remark}
\label{rem:dualnorms}
Note that we will be working simultaneously with two norms, $\|\cdot\|_\Phi$ and $\|\cdot\|_{\Phi_{\ast}}$. These are said to be \emph{complementary} or \emph{mutually associate}, but they do not need to be mutually dual. In order for $\|\cdot\|_{\Phi_{\ast}}$ to be equivalent to the dual of $\|\cdot\|_\Phi$ it is sufficient that $\Phi$ is \emph{doubling}, i.e., there exists a constant $K$ such that
\[ \Phi(2s) \leqslant K \Phi(s) \quad\text{for } s\in[0,\infty). \]
Indeed, doubling functions $\Phi$ are said to ``satisfy globally the $\Delta_2$-condition'' in \cite[Section~2.3, Definition~1]{RaoRen91}, so the reader can deduce the last claim from \cite[Section~3.4, Corollary~5]{RaoRen91} and \cite[Section~4.1, Theorem~6]{RaoRen91}.
\end{remark}

We will need to narrow down the above setting in order to obtain meaningful results. Throughout the paper we assume the following:
{\allowdisplaybreaks
\begin{subequations}
\begin{align}
& \text{$\Phi$ and ${\Phi_{\ast}}$ are mutually complementary Young functions}, \label{eq:PhiPsicond1} \\
& \Phi \text{ and } {\Phi_{\ast}} \text{ are } \textup{C}^1 \text{ on } [0,\infty) \text{ and } \textup{C}^2 \text{ on } (0,\infty), \label{eq:PhiPsicond2} \\
& \Phi''(s),\Phi_{\ast}''(s)>0 \text{ for } s\in(0,\infty), \label{eq:PhiPsicond3} \\
& \Phi' \text{ is strictly convex on } (0,\infty) \text{ and } \lim_{s\to0^+}\frac{\Phi'(s)}{s}=0, \label{eq:Phicond1} \\
& \sup_{s\in(0,\infty)}\frac{s\Phi'(s)}{\Phi(s)} < \infty, \label{eq:Phicond2} \\
& 1 < \inf_{s\in(0,\infty)}\frac{s\Phi''(s)}{\Phi'(s)} \leqslant \sup_{s\in(0,\infty)}\frac{s\Phi''(s)}{\Phi'(s)} < \infty. \label{eq:Phicond3}
\end{align}
\end{subequations}
}
Note that defining properties \eqref{eq:defYoung} and assumptions \eqref{eq:PhiPsicond1}--\eqref{eq:PhiPsicond3} imply that
\begin{equation}\label{eq:bijections}
\text{$\Phi'$ and $\Phi_{\ast}'$ are mutually inverse increasing bijections of $[0,\infty)$}.
\end{equation}
Because of that, assuming \eqref{eq:PhiPsicond1}--\eqref{eq:PhiPsicond3}, conditions \eqref{eq:Phicond1}--\eqref{eq:Phicond3} are respectively equivalent to conditions:
{\allowdisplaybreaks
\begin{subequations}
\begin{align}
& \Phi_{\ast}' \text{ is strictly concave on } (0,\infty) \text{ and } \lim_{s\to0^+}\frac{\Phi_{\ast}'(s)}{s}=\infty, \label{eq:Psicond1} \\
& \inf_{s\in(0,\infty)}\frac{s\Phi_{\ast}'(s)}{{\Phi_{\ast}}(s)} > 1, \label{eq:Psicond2} \\
&  0 < \inf_{s\in(0,\infty)}\frac{s\Phi_{\ast}''(s)}{\Phi_{\ast}'(s)} \leqslant \sup_{s\in(0,\infty)}\frac{s\Phi_{\ast}''(s)}{\Phi_{\ast}'(s)} < 1. \label{eq:Psicond3}
\end{align}
\end{subequations}
}
Indeed, equivalence \eqref{eq:Phicond1}$\Longleftrightarrow$\eqref{eq:Psicond1} is an immediate consequence of \eqref{eq:bijections} and
\[ \lim_{t\to0^+} \frac{t}{\Phi_{\ast}'(t)}
= \big[\text{substitute }\ t=\Phi'(s) \,\Longleftrightarrow\, s=\Phi_{\ast}'(t)\big]
= \lim_{s\to0^+}\frac{\Phi'(s)}{s}. \]
Moreover, equivalence \eqref{eq:Phicond3}$\Longleftrightarrow$\eqref{eq:Psicond3} clearly follows from
\begin{align}
\Big\{\frac{\Phi_{\ast}'(t)}{t\Phi_{\ast}''(t)} : t\in(0,\infty)\Big\}
& = \Big\{\frac{\Phi_{\ast}'(t)\Phi''(\Phi_{\ast}'(t))}{t} : t\in(0,\infty)\Big\} \nonumber \\
& \quad \big[\text{substitute }\ t=\Phi'(s) \,\Longleftrightarrow\, s=\Phi_{\ast}'(t)\big] \nonumber \\
& = \Big\{\frac{s\Phi''(s)}{\Phi'(s)} : s\in(0,\infty)\Big\}. \label{eq:weicrucomp}
\end{align}
Finally, computation
{\allowdisplaybreaks
\begin{align*}
\Big\{ \frac{\Phi(s)}{s\Phi'(s)} : s\in(0,\infty)\Big\}
& = \Big\{ \frac{1}{s\Phi'(s)} \int_{0}^{s} \Phi'(u) \,\textup{d}u : s\in(0,\infty)\Big\} \\
& \quad \big[\text{substitute }\ s=\Phi_{\ast}'(t) \,\Longleftrightarrow\, t=\Phi'(s)\big] \\
& = \Big\{ \frac{1}{\Phi_{\ast}'(t)t} \int_{0}^{\Phi_{\ast}'(t)} \Phi'(u) \,\textup{d}u : t\in(0,\infty)\Big\} \\
& \quad \Big[\begin{array}{c}u=\Phi_{\ast}'(v)\\ \textup{d}u=\Phi_{\ast}''(v)\,\textup{d}v\end{array}\Big] \\
& = \Big\{ \frac{1}{t\Phi_{\ast}'(t)} \int_{0}^{t} v \Phi_{\ast}''(v) \,\textup{d}v : t\in(0,\infty)\Big\} \\
& \quad \big[ \text{integration by parts} \big] \\
& = \Big\{ 1 - \frac{{\Phi_{\ast}}(t)}{t\Phi_{\ast}'(t)} : t\in(0,\infty)\Big\}
\end{align*}
}
shows \eqref{eq:Phicond2}$\Longleftrightarrow$\eqref{eq:Psicond2}.

It has already been implied in \eqref{eq:Phicond2} and \eqref{eq:Phicond3} that the following four quantities will be relevant later. They can be defined in terms of $\Phi$ as
\begin{align}
m := \inf_{s\in(0,\infty)}\frac{s\Phi'(s)}{\Phi(s)}, & \quad
M := \sup_{s\in(0,\infty)}\frac{s\Phi'(s)}{\Phi(s)}, \label{eq:quantities1} \\
\tilde{m} := \inf_{s\in(0,\infty)}\frac{s\Phi''(s)}{\Phi'(s)}, & \quad
\tilde{M} := \sup_{s\in(0,\infty)}\frac{s\Phi''(s)}{\Phi'(s)}, \label{eq:quantities2}
\end{align}
or, equivalently, thanks to the previous computations, in terms of ${\Phi_{\ast}}$ via
\begin{align}
\frac{M}{M-1} = \inf_{s\in(0,\infty)}\frac{s\Phi_{\ast}'(s)}{{\Phi_{\ast}}(s)}, & \quad
\frac{m}{m-1} = \sup_{s\in(0,\infty)}\frac{s\Phi_{\ast}'(s)}{{\Phi_{\ast}}(s)}, \label{eq:quantities3} \\
\frac{1}{\tilde{M}} = \inf_{s\in(0,\infty)}\frac{s\Phi_{\ast}''(s)}{\Phi_{\ast}'(s)}, & \quad
\frac{1}{\tilde{m}} = \sup_{s\in(0,\infty)}\frac{s\Phi_{\ast}''(s)}{\Phi_{\ast}'(s)}. \label{eq:quantities4}
\end{align}
Conditions \eqref{eq:PhiPsicond2}--\eqref{eq:Phicond1} imply that $\Phi''$ is continuous and increasing, so for any $s\in(0,\infty)$ Chebyshev's rearrangement inequality (see \cite[Section~2.17, Theorem~43]{HardyLittlewoodPolya52} and \cite[Chapter~6, Theorem~236]{HardyLittlewoodPolya52}) gives
\[ \frac{\Phi(s)}{s} = \frac{1}{s} \int_0^s (s-u) \Phi''(u) \,\textup{d}u
\leqslant \Big( \frac{1}{s} \int_0^s (s-u) \,\textup{d}u \Big) \Big( \frac{1}{s} \int_0^s \Phi''(u) \,\textup{d}u \Big) = \frac{1}{2}\Phi'(s). \]
Thus, our assumptions \eqref{eq:PhiPsicond1}--\eqref{eq:Phicond3} guarantee
\begin{equation}\label{eq:onmMs}
2\leqslant m\leqslant M<\infty,\quad 1<\tilde{m}\leqslant \tilde{M}<\infty.
\end{equation}
Consequences of \eqref{eq:Phicond1}, \eqref{eq:Phicond3}, \eqref{eq:Psicond1}, \eqref{eq:Psicond3}, \eqref{eq:quantities2}, and \eqref{eq:quantities4} are
\begin{align*}
(\tilde{m}-1)\frac{\Phi'(s)}{s^2} \leqslant & \frac{\textup{d}}{\textup{d}s}\frac{\Phi'(s)}{s} \leqslant (\tilde{M}-1)\frac{\Phi'(s)}{s^2}, \\
\frac{1-1/\tilde{m}}{\Phi_{\ast}'(s)} \leqslant & \frac{\textup{d}}{\textup{d}s}\frac{s}{\Phi_{\ast}'(s)} \leqslant \frac{1-1/\tilde{M}}{\Phi_{\ast}'(s)},
\end{align*}
and thus, by integrating in $s$, also
\begin{align}
\frac{1}{\tilde{M}-1} \frac{\Phi'(t)}{t} \leqslant & \int_{0}^{t}\frac{\Phi'(s)\,\textup{d}s}{s^2} \leqslant \frac{1}{\tilde{m}-1} \frac{\Phi'(t)}{t}, \label{eq:intPhiupper} \\
\frac{\tilde{M}}{\tilde{M}-1} \frac{t}{\Phi_{\ast}'(t)} \leqslant & \int_{0}^{t}\frac{\textup{d}s}{\Phi_{\ast}'(s)} \leqslant \frac{\tilde{m}}{\tilde{m}-1} \frac{t}{\Phi_{\ast}'(t)} \label{eq:intPsiupper}
\end{align}
for every $t\in(0,\infty)$.

Let us also remark that $\Phi$ and ${\Phi_{\ast}}$ satisfying \eqref{eq:PhiPsicond1}--\eqref{eq:Phicond3}, and thus also \eqref{eq:Psicond1}--\eqref{eq:Psicond3}, will automatically be doubling, as defined in Remark~\ref{rem:dualnorms}. This is easily seen as
\begin{align*}
\frac{\Phi(2s)}{\Phi(s)} & = \exp\Big(\int_{1}^{2}\frac{st\Phi'(st)}{\Phi(st)}\frac{\textup{dt}}{t}\Big) \leqslant 2^M, \\
\frac{{\Phi_{\ast}}(2s)}{{\Phi_{\ast}}(s)} & = \exp\Big(\int_{1}^{2}\frac{st\Phi_{\ast}'(st)}{{\Phi_{\ast}}(st)}\frac{\textup{dt}}{t}\Big) \leqslant 2^{m/(m-1)} \leqslant 4
\end{align*}
for $s\in(0,\infty)$.
The first inequality above used \eqref{eq:quantities1}, while the second one used \eqref{eq:quantities3} and \eqref{eq:onmMs}.
Consequently, $\|\cdot\|_{\Phi_{\ast}}\sim\|\cdot\|_\Phi^\ast$ and $\|\cdot\|_\Phi\sim\|\cdot\|_{\Phi_{\ast}}^\ast$, where $\|\cdot\|^\ast$ denotes the dual norm of $\|\cdot\|$ and $\sim$ denotes the equivalence of norms.

\begin{example}[Lebesgue spaces $\textup{L}^p$]
\label{ex:Lpnorms}
A typical example of a pair of functions $\Phi,{\Phi_{\ast}}$ for which the above conditions \eqref{eq:PhiPsicond1}--\eqref{eq:Phicond3} hold is
\begin{equation}\label{eq:LpLq}
\Phi(s)=\frac{s^p}{p},\quad {\Phi_{\ast}}(s)=\frac{s^q}{q}\quad \text{for } p\in(2,\infty),\ q\in(1,2),\ \frac{1}{p}+\frac{1}{q}=1.
\end{equation}
In this case $\|\cdot\|_\Phi\sim\|\cdot\|_{\textup{L}^p}$ and $\|\cdot\|_{\Phi_{\ast}}\sim\|\cdot\|_{\textup{L}^q}$.
Also note that
\[ m=M=p,\quad \tilde{m}=\tilde{M}=p-1. \]
\end{example}

\begin{example}[Zygmund spaces $\textup{L}^r\log\textup{L}$]
\label{ex:LrlogL}
Conditions \eqref{eq:PhiPsicond1}--\eqref{eq:Phicond3} are also satisfied for functions that ``behave like powers.''
We can take
\[ \Phi(s) = s^r \log(s+e) \quad \text{for } r\in(2,\infty), \]
while we cannot, and do not need to, evaluate its conjugate function ${\Phi_{\ast}}$ explicitly.
Exact expressions for $M$ and $\tilde{M}$ involve a bit complicated numerical constants, but we always have
\[ r=m\leqslant M< r+1,\quad r-1=\tilde{m}\leqslant \tilde{M}< r. \]
\end{example}

\begin{example}[Superpositions of powers I]
\label{ex:powers1}
Yet another useful example satisfying \eqref{eq:PhiPsicond1}--\eqref{eq:Phicond3} is
\begin{equation}\label{eq:spsr}
\Phi(s) = s^p + \varepsilon s^r \quad \text{for } 2<r<p<\infty,\ \varepsilon\in(0,1].
\end{equation}
This Young function exhibits the features of $s^r$ for small positive  $s$ and those of $s^p$ for large $s$.
We have
\[ m=r,\quad M=p,\quad \tilde{m}=r-1,\quad \tilde{M}=p-1 \]
and note that these quantities are independent of $\varepsilon$.
A straightforward generalization of this example is
\[ \Phi(s) = \int s^t \,\textup{d}\mu(t) \]
for a finite positive Borel measure $\mu$ supported on a compact subinterval of $(2,\infty)$.
\end{example}

\begin{example}[Superpositions of powers II]
\label{ex:powers2}
Define $\Phi$ as the conjugate function of
\[ {\Phi_{\ast}}(s) = s^q + s^r \quad \text{for } 1<q<r<2. \]
This time $\Phi$ cannot, and does not need to, be evaluated explicitly.
It is now more convenient to verify conditions \eqref{eq:bijections}, \eqref{eq:PhiPsicond2}--\eqref{eq:PhiPsicond3}, and \eqref{eq:Psicond1}--\eqref{eq:Psicond3}, which are sufficient by the previous discussion. Moreover, the four characteristic quantities can be computed from \eqref{eq:quantities3} and \eqref{eq:quantities4}, and they equal
\[ m=\frac{r}{r-1},\quad M=\frac{q}{q-1},\quad \tilde{m}=\frac{r}{r-1}-1,\quad \tilde{M}=\frac{q}{q-1}-1. \]
A generalization of this example is
\[ {\Phi_{\ast}}(s) = \int s^t \,\textup{d}\mu(t), \]
where $\mu$ is a finite positive Borel measure supported on a compact subinterval of $(1,2)$.
\end{example}

\subsection{Divergence-form operators with non-smooth complex coefficients}
\label{subsec:divform}
Once again, we only give the basic definitions; more details can be found in the book by Ouhabaz \cite{Ouhabaz05}.
Let $A\colon\mathbb{R}^d\to\mathbb{C}^{d\times d}$ be a matrix function with coefficients in $\textup{L}^\infty(\mathbb{R}^d)$.
It is said to be \emph{(uniformly) elliptic} if
\begin{align}
\Lambda(A) & := \mathop{\textup{ess\,sup}}_{x\in\mathbb{R}^d} \max_{\substack{\zeta,\eta\in\mathbb{C}^d\\ |\zeta|=|\eta|=1}} \big|\langle A(x)\zeta,\eta\rangle_{\mathbb{C}^d}\big| < \infty, \label{eq:condbigl} \\
\lambda(A) & := \mathop{\textup{ess\,inf}}_{x\in\mathbb{R}^d} \min_{\substack{\xi\in\mathbb{C}^d\\ |\xi|=1}} \mathop{\textup{Re}}\big\langle A(x)\xi,\xi\big\rangle_{\mathbb{C}^d} > 0. \label{eq:condlittlel}
\end{align}
Define the corresponding \emph{divergence-form operator} formally as
\[ L_A f := -\mathop{\textup{div}}(A\nabla f). \]
More precisely, $L_A$ is defined via duality:
\begin{equation}\label{eq:sesqform}
\langle L_A f, g \rangle_{\textup{L}^2(\mathbb{R}^d)} = \int_{\mathbb{R}^d} \langle A(x)\nabla f(x), \nabla g(x) \rangle_{\mathbb{C}^d} \,\textup{d}x
\end{equation}
and its domain $\mathcal{D}(L_A)$ is the set of all functions $f$ from the Sobolev space $\textup{W}^{1,2}(\mathbb{R}^d)$ for which the right hand side of \eqref{eq:sesqform}, regarded as an antilinear functional in $g\in\textup{W}^{1,2}(\mathbb{R}^d)$, extends boundedly to the whole $\textup{L}^2(\mathbb{R}^d)$.
We will consider the operator semigroup on $\textup{L}^2(\mathbb{R}^d)$ generated by $-L_A$:
\[ T^A_t := \exp(-t L_A) \quad\text{for } t\in(0,\infty). \]

Carbonaro and Dragi\v{c}evi\'{c} \cite{CarbonaroDragicevic20} introduced the property of \emph{$p$-ellipticity} of $A$ for $p\in[1,\infty]$ by additionally requiring $\Delta_{p}(A)>0$, where
\begin{align}
\Delta_{p}(A) & := \mathop{\textup{ess\,inf}}_{x\in\mathbb{R}^d} \min_{\substack{\xi\in\mathbb{C}^d\\ |\xi|=1}} \mathop{\textup{Re}}\Big\langle A(x)\xi, \ \xi + \Big|1-\frac{2}{p}\Big| \,\overline{\xi} \Big\rangle_{\mathbb{C}^d} \nonumber \\
& = \mathop{\textup{ess\,inf}}_{x\in\mathbb{R}^d} \min_{\substack{\xi\in\mathbb{C}^d\\ |\xi|=1}} \mathop{\textup{Re}}\Big\langle A(x)\xi, \ \xi + \Big(1-\frac{2}{p}\Big) \,\overline{\xi} \Big\rangle_{\mathbb{C}^d}. \label{eq:condpellip}
\end{align}
An equivalent condition was discovered independently by Dindo\v{s} and Pipher \cite{DindosPipher19} as a strengthening of the earlier condition introduced by Cialdea and Maz'ya \cite{CialdeaMazya05}.
It is also easy to check that for $2\leqslant p_1\leqslant p_2<\infty$ we have
\[ \lambda(A) = \Delta_2(A) \geqslant \Delta_{p_1}(A) \geqslant \Delta_{p_2}(A) \]
and that the following inclusions hold:
\begin{align}
& {\arraycolsep=2pt
\left\{\begin{array}{c}\text{elliptic}\\ \text{matrices}\end{array}\right\}
= \left\{\begin{array}{c}\text{$2$-elliptic}\\ \text{matrices}\end{array}\right\}
\supseteq \left\{\begin{array}{c}\text{$p_1$-elliptic}\\ \text{matrices}\end{array}\right\}
} \nonumber \\
& {\arraycolsep=2pt
\supseteq \left\{\begin{array}{c}\text{$p_2$-elliptic}\\ \text{matrices}\end{array}\right\}
\supseteq \left\{\begin{array}{c}\text{matrices that are $p$-elliptic}\\ \text{for every }p\in[2,\infty)\end{array}\right\}
= \left\{\begin{array}{c}\text{real elliptic}\\ \text{matrices}\end{array}\right\};
} \label{eq:pellinclusions}
\end{align}
see \cite[Section~5.3]{CarbonaroDragicevic20}.
Therefore, the notion of $p$-ellipticity bridges the gap between real and complex elliptic matrix functions.

\smallskip
Let us now take a complex matrix function $A$ and an arbitrary $\textup{C}^2$ function $\Psi\colon(0,\infty)\to(0,\infty)$ such that $\Psi'(s)>0$ and $\Psi''(s)>0$ for every $s\in(0,\infty)$.
We define a quantity $\Delta_{\Psi}(A)$ ``measuring'' certain $\Psi$-ellipticity of $A$ as
\begin{equation}\label{eq:condPhi1}
\Delta_{\Psi}(A) := \mathop{\textup{ess\,inf}}_{x\in\mathbb{R}^d} \inf_{\substack{\xi\in\mathbb{C}^d,\,|\xi|=1\\ s\in(0,\infty)}} \mathop{\textup{Re}}\Big\langle A(x)\xi, \ \xi + \frac{s\Psi''(s)-\Psi'(s)}{s\Psi''(s)+\Psi'(s)} \,\overline{\xi} \Big\rangle_{\mathbb{C}^d}
\end{equation}
and say that $A$ is \emph{$\Psi$-elliptic} if $\Delta_{\Psi}(A)>0$.
Indeed, \eqref{eq:condPhi1} reduces to \eqref{eq:condpellip} when $\Psi(s)=s^p/p$.
However, one could merely generalize \eqref{eq:condpellip} in many other ways, so let us spend a few more words motivating the above definition and explaining why it is more useful than the other possibilities.

\begin{remark}
One way to motivate the definition \eqref{eq:condpellip} of $\Delta_p(A)$ was to differentiate formally using \eqref{eq:sesqform}:
\begin{align*}
\frac{\textup{d}}{\textup{d}t}\Big|_{t=0} \big\| (T^A_t f)(x) \big\|_{\textup{L}^{p}(\mathbb{R}^d)}^p
& = \frac{\textup{d}}{\textup{d}t}\Big|_{t=0} \int_{\mathbb{R}^d} \big| (T^A_t f)(x) \big|^p \,\textup{d}x \\
& = -\frac{p^2}{2} \int_{\mathbb{R}^d} s^{p-2} \mathop{\textup{Re}}\Big\langle A(x)\xi, \ \xi + \Big(1-\frac{2}{p}\Big) \,\overline{\xi} \Big\rangle_{\mathbb{C}^d} \,\textup{d}x,
\end{align*}
where we used the shorthand notation
\begin{equation}\label{eq:sxitmp}
s=|f(x)|, \quad \xi=\overline{\mathop{\textup{sgn}}f(x)}\,\nabla f(x).
\end{equation}
From this it is easy to see that $\Delta_p(A)\geq0$ is an elegant sufficient condition for contractivity of the semigroup $(T_t^A)_{t>0}$ on $\textup{L}^{p}(\mathbb{R}^d)$; rigorous arguments and numerous details can be found in \cite[Section~7]{CarbonaroDragicevic20}.
Prior to \cite{CarbonaroDragicevic20}, Cialdea and Maz'ya \cite{CialdeaMazya05} studied \emph{$\textup{L}^p$-dissipativity} (see \cite[Definition~1]{CialdeaMazya05}) of the sesquilinear form \eqref{eq:sesqform}, which is, by a result of Nittka \cite{Nittka12}, equivalent to the aforementioned contractivity.
In the particular case when $\mathop{\textup{Im}}A$ is symmetric, their result \cite[Theorem~5]{CialdeaMazya05} claims that these are further equivalent to the condition
\begin{equation}\label{eq:CMLp}
|p-2| \big|\langle \mathop{\textup{Im}}A(x)\xi,\xi\rangle_{\mathbb{R}^d}\big| \leqslant 2(p-1)^{1/2} \langle \mathop{\textup{Re}}A(x)\xi,\xi\rangle_{\mathbb{R}^d}
\end{equation}
for all $x,\xi\in\mathbb{R}^d$.
Much more recently, Cialdea and Maz'ya \cite{CialdeaMazya21} introduced the concept of \emph{functional dissipativity} with respect to a general function $\Psi$ as before. Under the same assumption that $\mathop{\textup{Im}}A$ is symmetric, their result \cite[Theorem~1]{CialdeaMazya21} characterizes this property via the condition
\begin{equation}\label{eq:CMfunctional}
\Big|\Psi''(s)-\frac{\Psi'(s)}{s}\Big| \big|\langle \mathop{\textup{Im}}A(x)\xi,\xi\rangle_{\mathbb{R}^d}\big| \leqslant 2\Big(\frac{\Psi'(s)\Psi''(s)}{s}\Big)^{1/2} \langle \mathop{\textup{Re}}A(x)\xi,\xi\rangle_{\mathbb{R}^d}
\end{equation}
for all $x,\xi\in\mathbb{R}^d$ and $s\in(0,\infty)$.
To avoid any possible confusion, let us mention that the paper \cite{CialdeaMazya21} prefers to formulate \eqref{eq:CMfunctional} in terms of $\varphi(s)=\Psi'(s)/s$.
%Clearly, when $\mathop{\textup{Im}}A$ is nontrivial, \eqref{eq:CMfunctional} reduces to \eqref{eq:CMLp} for $p$ such that
%\[ \sup_{s\in(0,\infty)} \Big(\frac{s}{\Psi'(s)\Psi''(s)}\Big)^{1/2} \Big|\Psi''(s)-\frac{\Psi'(s)}{s}\Big| = (p-1)^{-1/2} |p-2|; \]
%also see \cite[Corollary~5]{CialdeaMazya21}.
%When $\Psi$ is as in Subsection~\ref{subsec:Orlicz} and $p\in[2,\infty)$, the last equality simplifies precisely as \eqref{eq:defofexpp} again.
Exact relationship of this functional dissipativity to the contractivity of $(T_t^A)_{t>0}$ in Orlicz spaces has not been clarified yet, as the Luxemburg norms \eqref{eq:Luxnorms} have less straightforward definitions than the $\textup{L}^p$ norms.
However, one can still differentiate the nonlinear functional:
\begin{align*}
& \frac{\textup{d}}{\textup{d}t}\Big|_{t=0} \int_{\mathbb{R}^d} \Psi\big(\big| (T^A_t f)(x) \big|\big) \,\textup{d}x \\
& = -\frac{1}{2} \int_{\mathbb{R}^d} \Big(\Psi''(s)+\frac{\Psi'(s)}{s}\Big)\mathop{\textup{Re}}\Big\langle A(x)\xi,\, \xi + \frac{s\Psi''(s)-\Psi'(s)}{s\Psi''(s)+\Psi'(s)}\,\overline{\xi} \Big\rangle_{\mathbb{C}^d} \,\textup{d}x,
\end{align*}
where $s$ and $\xi$ are as in \eqref{eq:sxitmp}.
From this we see that $\Delta_{\Psi}(A)\geq0$ is in fact also a very natural condition, which might justify our choice for $\Delta_{\Psi}(A)$.
%There is another motivational line of reasoning naturally leading to the quantity \eqref{eq:defofexpp}.
Much more on functional dissipativity can be found in \cite{CialdeaMazya21}, but these discussions are not strictly relevant here.
We are aiming at bilinear embeddings, which are significantly more involved and \eqref{eq:condPhi1} simply appears in the computations in Sections~\ref{sec:Hessians} and \ref{sec:Bellmanfn}.
\end{remark}

\begin{remark}\label{rem:simplythep}
Now, if $\Phi$ is a Young function satisfying \eqref{eq:PhiPsicond1}--\eqref{eq:PhiPsicond3} then the computation from \eqref{eq:weicrucomp} easily shows
\[ \Delta_{\Phi_\ast}(A) = \Delta_{\Phi}(A). \]
This is a generalization of the fact that $\Delta_p(A)$ is invariant under conjugation of the Lebesgue exponent $p$.
However, definition \eqref{eq:condPhi1} does not lead to a particularly novel concept for such $\Phi$, because it can be reduced to the mere $p$-ellipticity for an appropriate number $p$.
More precisely, $\Delta_{\Phi}(A) = \Delta_{p}(A)$ for the unique $p\in[2,\infty]$ such that
\[ \sup_{s\in(0,\infty)} \Big|\frac{s\Phi''(s)-\Phi'(s)}{s\Phi''(s)+\Phi'(s)}\Big| = 1-\frac{2}{p}. \]
If $\Phi$ additionally satisfies all standing assumptions from Subsection~\ref{subsec:Orlicz}, then the number $p$ simplifies as
\begin{equation}\label{eq:defofexpp}
p = \sup_{s\in(0,\infty)}\frac{s\Phi''(s)}{\Phi'(s)} + 1.
\end{equation}
We further recognize it as $\tilde{M}+1$, with the number $\tilde{M}$ given in \eqref{eq:quantities2}.
Consequently, we will avoid complications and formulate a condition in our main result simply in terms of $p$-ellipticity for the exponent \eqref{eq:defofexpp}.
\end{remark}

\subsection{The main result}
\label{subsec:formulationresults}
Finally, we can state the desired estimate. Recall quantities \eqref{eq:quantities1}--\eqref{eq:quantities4} from Subsection~\ref{subsec:Orlicz} and definitions \eqref{eq:condbigl}, \eqref{eq:condlittlel}, \eqref{eq:condpellip} from Subsection~\ref{subsec:divform}.

\begin{theorem}\label{thm:mainthm}
Suppose that $\Phi$ and ${\Phi_{\ast}}$ satisfy conditions \eqref{eq:PhiPsicond1}--\eqref{eq:Phicond3} and let $A,B\colon\mathbb{R}^d\to\mathbb{C}^{d\times d}$ be $p$-elliptic matrix functions with $\textup{L}^\infty$ coefficients, where $p=\tilde{M}+1$, i.e., $p$ is given by \eqref{eq:defofexpp}.
Denote
\begin{equation}\label{eq:cabconst}
C_{p}(A,B) := \frac{\max\{\Lambda(A),\Lambda(B)\}}{\min\{\Delta_{p}(A),\Delta_{p}(B)\} \min\{\lambda(A),\lambda(B)\}}
\end{equation}
and
\begin{equation}\label{eq:dppconst}
D(\Phi) := \max\Big\{1,\frac{M}{\tilde{m}}\Big\} \Big(\frac{\tilde{m}}{\tilde{M}} \frac{\tilde{M}-1}{\tilde{m}-1}\Big)^{1/2} .
\end{equation}
Then an Orlicz-space bilinear embedding,
\begin{equation}\label{eq:bilinorl}
\int_{0}^{\infty} \int_{\mathbb{R}^d} |(\nabla T^A_t f)(x)| \,|(\nabla T^B_t g)(x)| \,\textup{d}x \,\textup{d}t
\leqslant 40 \,C_{p}(A,B) \,D(\Phi) \,\|f\|_{\Phi} \|g\|_{{\Phi_{\ast}}},
\end{equation}
holds for any complex functions $f,g\in\textup{C}_{c}^{\infty}(\mathbb{R}^d)$.
\end{theorem}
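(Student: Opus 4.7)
The natural plan is to follow the \emph{heat-flow Bellman function} method, closely mirroring the scheme of \cite{CarbonaroDragicevic20} but now driven by the Orlicz-adapted Bellman function $Q$ that Section~\ref{sec:Bellmanfn} will produce via \eqref{eq:mainBellman}. By homogeneity of both sides of \eqref{eq:bilinorl} under the rescalings $f\mapsto \alpha f$ and $g\mapsto \beta g$, it suffices to assume $\|f\|_\Phi = \|g\|_\Psi = 1$, so that $\int_{\mathbb{R}^d}\Phi(|f|) \leq 1$ and $\int_{\mathbb{R}^d}\Psi(|g|) \leq 1$. Writing $u_t := T^A_t f$ and $v_t := T^B_t g$ (viewed initially in $\textup{L}^2\cap\textup{L}^\infty$ by $\textup{C}_c^\infty$ data) and introducing the action
\begin{equation*}
\mathcal{E}(t) := \int_{\mathbb{R}^d} Q\bigl(u_t(x),v_t(x)\bigr)\,\textup{d}x,
\end{equation*}
the goal splits into two pointwise estimates on $Q$: a \emph{size bound} $0 \leq Q(\zeta,\eta) \leq C_1 \bigl(\Phi(|\zeta|)+\Psi(|\eta|)\bigr)$, which forces $\mathcal{E}(0) \leq 2C_1$ and $\mathcal{E}(\infty)\geq 0$; and a \emph{generalized convexity} bound of the form $-\tfrac{d}{dt}\mathcal{E}(t) \geq c \int_{\mathbb{R}^d}|\nabla u_t||\nabla v_t|\,\textup{d}x$ with $c^{-1} \leq 20\,C_p(A,B)\,D(\Phi,\Psi)$. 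Integrating in $t$ immediately yields \eqref{eq:bilinorl}.

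For the derivative, I would differentiate under the integral sign using $\partial_t u_t = -L_A u_t = \mathop{\textup{div}}(A\nabla u_t)$ and the analogous equation for $v_t$, and integrate by parts using the sesquilinear form \eqref{eq:sesqform}. This produces
\begin{equation*}
-\frac{d}{dt}\mathcal{E}(t) = \int_{\mathbb{R}^d}\mathcal{H}_Q^{A,B}\bigl(u_t(x),v_t(x);\nabla u_t(x),\nabla v_t(x)\bigr)\,\textup{d}x,
\end{equation*}
where $\mathcal{H}_Q^{A,B}$ is the generalized Hessian form of Section~\ref{sec:Hessians} that tests the second derivatives of $Q$ at $(\zeta,\eta)\in\mathbb{C}^2$ against the directions $(\nabla u_t,\nabla v_t)\in(\mathbb{C}^d)^2$ twisted by the matrices $A,B$. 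The required pointwise inequality
\begin{equation*}
\mathcal{H}_Q^{A,B}(\zeta,\eta;\zeta',\eta') \geq \frac{1}{20\,C_p(A,B)\,D(\Phi,\Psi)}\,|\zeta'|\,|\eta'|,
\end{equation*}
valid for all $\zeta,\eta\in\mathbb{C}$ and $\zeta',\eta'\in\mathbb{C}^d$, is exactly the content of the generalized convexity property (Lemma~\ref{lm:Xlower}) that Section~\ref{sec:Bellmanfn} is built to verify.

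The main obstacle, and the reason Section~\ref{sec:Bellmanfn} is the heart of the paper, is the construction of $Q$ itself: I would expect an Ansatz that fuses the Nazarov–Treil quadratic-in-$\min$ profile with the inhomogeneous Orlicz envelope $\Phi(|\zeta|)+\Psi(|\eta|)$, tuned so that the cross-term extracts precisely the $p$-ellipticity coefficient from \eqref{eq:condpellip} with $p=\tilde M+1$ given by \eqref{eq:defofexpp}. Verifying convexity in the generalized (matrix-twisted) sense requires a case split between the ``small'' regime where the $\Phi,\Psi$ part dominates and the ``large'' regime where the quadratic piece does, and the quantitative constants of \eqref{eq:quantities1}--\eqref{eq:quantities4} together with the consequences \eqref{eq:intPhiupper}--\eqref{eq:intPsiupper} have to absorb cleanly into $D(\Phi,\Psi)$.

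Subject to the Bellman function analysis, the remaining work in Section~\ref{sec:proofofthm} consists of standard but nontrivial checks: a mollification/truncation to place $(u_t,v_t)$ in a regularity class where the integration by parts and the differentiation under the integral sign are rigorous; a verification that $\mathcal{E}(t)\to 0$ as $t\to\infty$ using $\textup{L}^2$-decay of the semigroups on functions of zero mean (or an $\textup{L}^p$ approximation argument, since the starting data are in $\textup{C}_c^\infty$); and tracking the numerical constants through the chain to produce the factor $40\,C_p(A,B)\,D(\Phi,\Psi)$. The pointwise Bellman step is the only genuinely novel ingredient; everything else will transplant from \cite{CarbonaroDragicevic20}.
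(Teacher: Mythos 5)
Your outline is essentially the same route as the paper's: dehomogenize to a Young-function estimate, run the heat flow through $\mathcal{E}(t)=\int \mathfrak{X}(T^A_t f,T^B_t g)$, bound $\mathcal{E}(0)$ from above by the size bound (Lemma~\ref{lm:Xupper}) and $-\mathcal{E}'$ from below by the generalized Hessian bound (Lemma~\ref{lm:Xlower}), and integrate in time. Two caveats worth recording, though, since they point to ingredients your sketch does not name. First, your later remark that one must verify $\mathcal{E}(t)\to0$ is unnecessary (and would be awkward to prove in this generality); the paper only uses $\mathcal{E}(T)\geqslant0$, which is free since $\mathfrak{X}\geqslant0$ --- you already had this right with ``$\mathcal{E}(\infty)\geqslant0$.'' Second, and more substantively, the identity $-\mathcal{E}'(t)=\int H_{\mathfrak{X}}^{A,B}[\,\cdot\,]$ does not hold as written: $T^A_t f$ has no compact support, so one must introduce a spatial cutoff $\psi_R$, which produces a remainder $\mathcal{R}_{T,R,\nu}$ involving $\nabla\psi_R$ paired with $\partial_{\bar u}\mathfrak{X}$ and $\partial_{\bar v}\mathfrak{X}$; killing that remainder is exactly why the first-derivative bounds of Lemma~\ref{lm:Xderiv} (and Proposition~\ref{prop:Xmollified}\,(\ref{eq:Xmoll2})) are part of the Bellman package, not an afterthought. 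Relatedly, the mollification that makes everything rigorous is applied to $\mathfrak{X}$ itself (Proposition~\ref{prop:Xmollified}), not to $(u_t,v_t)$, because $\mathfrak{X}$ fails to be $\textup{C}^2$ across the surface $\mathcal{Y}$ and at the coordinate planes; Lemma~\ref{lm:XisC1} is what makes this mollification legitimate. With those items added your plan matches Section~\ref{sec:proofofthm} and the constants indeed compose to $40\,C_p(A,B)\,D(\Phi,\Psi)$.
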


A few comments on Theorem~\ref{thm:mainthm} could help to better orient the reader.

\begin{remark}[Constants]
Quantity \eqref{eq:dppconst} depends only on $\Phi$ (or, equivalently, on ${\Phi_{\ast}}$), while \eqref{eq:cabconst} depends on the ellipticity constants of $A$ and $B$ and on the exponent $p$, which, in turn, depends only on $\Phi$ again. The constant in \eqref{eq:bilinorl} depends on the ambient dimension $d$ in no other way than through these two quantities, so we can say that this estimate is \emph{dimension-free}. This is a desired property of all bilinear embeddings.
\end{remark}

\begin{remark}[Real case]
If $A$ and $B$ have real coefficients, then the $p$-ellipticity condition is satisfied automatically; recall \eqref{eq:pellinclusions}. We are not in a position to list the vast literature on estimates for real elliptic divergence-form operators, including many singular integral estimates as their special cases; see the references in \cite{CarbonaroDragicevic20,CarbonaroDragicevicKovacSkreb21}. The emphasis of the present paper is on the complex case.
\end{remark}

\begin{remark}[Duality]
\label{rem:duality}
By Remark~\ref{rem:dualnorms} and since $\Phi$ is doubling, the product $\|f\|_{\Phi} \|g\|_{{\Phi_{\ast}}}$ on the right hand side of \eqref{eq:bilinorl} can be rewritten as $\|f\|_{\Phi} \|g\|_{\Phi}^\ast$. That way \eqref{eq:bilinorl} can sometimes be viewed as an estimate on a single Orlicz space $\textup{L}^{\Phi}(\mathbb{R}^d)$ (see the definition in \cite{RaoRen91,HarjulehtoHasto19}) of a certain linear operator $\mathcal{L}$ such that the left hand side of \eqref{eq:bilinorl} dominates $|\langle\mathcal{L}f,g\rangle_{\textup{L}^2(\mathbb{R}^d)}|$.
\end{remark}

\begin{remark}[Applicability]
Estimate \eqref{eq:bilinorl} is a generalization of \cite[Theorem~1.1]{CarbonaroDragicevic20} by Carbonaro and Dragi\v{c}evi\'{c}, which was concerned with $\textup{L}^p$ and $\textup{L}^q$ norms only, i.e., with $\Phi$ and ${\Phi_{\ast}}$ given by \eqref{eq:LpLq} in Example~\ref{ex:Lpnorms}.
Indeed, the constant \eqref{eq:cabconst} is the same one appearing in their theorem, just formulated in a slightly different manner. Also, in the particular case \eqref{eq:LpLq} we easily compute \eqref{eq:dppconst} as
\[ D(\Phi) = \frac{p}{p-1} = q \leqslant 2, \]
so our constant in \eqref{eq:bilinorl} becomes the same one as in \cite{CarbonaroDragicevic20}, up to a factor $4$.
Theorem~\ref{thm:mainthm} also applies to the other examples given in Subsection~\ref{subsec:Orlicz}. In the case of Example~\ref{ex:LrlogL} the exact exponent $p$ is some number from $[r,r+1)$, so one can safely replace it by $r+1$. In Example~\ref{ex:powers1}, just as in Example~\ref{ex:Lpnorms}, this exponent is exactly the eponymous parameter $p$, while in Example~\ref{ex:powers2} it is equal to $q/(q-1)$, the conjugate exponent of $q$.
\end{remark}

\begin{remark}[Dehomogenization]
\label{rem:dehomogenization}
It is easy to see that estimate \eqref{eq:bilinorl} follows from (what could be called) a Young-function bilinear embedding,
\begin{align}
& \int_{0}^{\infty} \int_{\mathbb{R}^d} |(\nabla T^A_t f)(x)| \,|(\nabla T^B_t g)(x)| \,\textup{d}x \,\textup{d}t \nonumber \\
& \leqslant 20 \,C_{p}(A,B) \,D(\Phi) \Big( \int_{\mathbb{R}^d} \Phi(|f(x)|) \,\textup{d}x + \int_{\mathbb{R}^d} {\Phi_{\ast}}(|g(x)|) \,\textup{d}x \Big). \label{eq:bilinorl2}
\end{align}
Indeed, take arbitrary functions $f,g$ and arbitrary $\alpha,\beta\in(0,\infty)$ such that
\[ \int_{\mathbb{R}^d} \Phi\Big(\frac{|f(x)|}{\alpha}\Big) \,\textup{d}x \leqslant 1, \quad \int_{\mathbb{R}^d} {\Phi_{\ast}}\Big(\frac{|g(x)|}{\beta}\Big) \,\textup{d}x \leqslant 1. \]
By applying \eqref{eq:bilinorl2} to $f/\alpha$ and $g/\beta$ and using homogeneity of the left hand side, we conclude
\[ \int_{0}^{\infty} \int_{\mathbb{R}^d} |(\nabla T^A_t f)(x)| \,|(\nabla T^B_t g)(x)| \,\textup{d}x \,\textup{d}t
\leqslant 40 \,C_{p}(A,B) \,D(\Phi) \,\alpha \beta. \]
Taking infima over all such $\alpha$ and $\beta$ we derive \eqref{eq:bilinorl}.
Thus, we only need to establish \eqref{eq:bilinorl2}.
Advantages of such ``dehomogenization'' for proofs using the Bellman function technique were elaborated by Nazarov and Treil \cite[Section~8.1]{NazarovTreil96}, who called it the ``H\"{o}lder vs Young'' trick.  It is not at all lossy (up to unimportant constants) in the case of the $\textup{L}^p$ spaces, while here it simply seems to be the most natural thing to use.
\end{remark}

\begin{remark}[Interpolation]
In relation with Remark~\ref{rem:duality}, many particular cases and weaker forms of estimate \eqref{eq:bilinorl} are immediate consequences of \cite[Theorem~1.1]{CarbonaroDragicevic20}. Let us again take a linear operator $\mathcal{L}$ such that $|\langle\mathcal{L}f,g\rangle_{\textup{L}^2(\mathbb{R}^d)}|$ is less than or equal to the left hand side of \eqref{eq:bilinorl}.
%\eqref{eq:bilinorl} as a bound for a sublinear operator on a single Orlicz space $\textup{L}^{\Phi}(\mathbb{R}^d)$.
The $p$-ellipticity assumption guarantees $\textup{L}^p\to\textup{L}^p$ and $\textup{L}^{q}\to\textup{L}^{q}$ estimates, where $q$ is the exponent conjugate to $p$. A collection of Orlicz spaces $\textup{L}^\Phi$ is ``squeezed between'' $\textup{L}^p$ and $\textup{L}^{q}$, so that certain interpolation arguments can cheaply provide the estimate $\textup{L}^{\Phi}\to\textup{L}^{\Phi}$. However, these arguments cannot recover Theorem~\ref{thm:mainthm} in its full generality.

Indeed, \emph{real} (i.e., Marcinkiewicz-type) Orlicz-space interpolation \cite{Zygmund56,Torchinsky76,Cianchi98} applies as soon as the Young function $\Phi$ is ``sufficiently far'' from the powers $s\mapsto s^p$ and $s\mapsto s^{q}$.
Cianchi \cite{Cianchi98} provided a definite result on the topic and gave a precise description of all Young functions $\Phi$ such that (what is nowadays usually called) restricted weak type $(p,p)$ and $(q,q)$ bounds generally imply the strong bound $\textup{L}^{\Phi}\to\textup{L}^{\Phi}$. Just a single necessary condition (out of many) from his paper in our case reads
\begin{equation}\label{eq:Cianchicond}
\int_1^{\infty} \frac{\Phi(s)}{s^{p+1}} \,\textup{d}s < \infty.
\end{equation}
Note that \eqref{eq:Cianchicond} is not satisfied for \eqref{eq:spsr}, even if we only take $\varepsilon=1$, so Marcinkiewicz-type interpolation cannot give our estimate \eqref{eq:bilinorl} in the case of Example~\ref{ex:powers1}.
Moreover, any real interpolation argument one could think of would give a constant that necessarily blows up as $\varepsilon\to0^+$. On the other hand, we see that \eqref{eq:cabconst} and \eqref{eq:dppconst} are independent of $\varepsilon$, and so is our main estimate.

Strictly speaking, \emph{complex} Orlicz-space interpolation \cite{Kreinetal82,KarlovichMaligranda01} is not applicable simply because the left hand side of \eqref{eq:bilinorl} is only bi-sublinear and not bilinear in $f$ and $g$.
%Various linearization tricks on $\mathbb{C}^d$ would necessarily blow up the constant as $d\to\infty$.
However, in many applications of bilinear embeddings we only need to control the bilinear form $(f,g)\mapsto\langle \mathcal{L}f, g \rangle_{\textup{L}^2(\mathbb{R}^d)}$ for some linear operator $\mathcal{L}$, as mentioned earlier. Then Orlicz-space interpolation of linear operators can be useful, albeit still with certain limitations; see \cite[Theorem~5.1]{KarlovichMaligranda01}. Nevertheless, even in particular situations when complex interpolation does apply, it is still interesting to have a direct proof of the estimate $\textup{L}^{\Phi}\to\textup{L}^{\Phi}$.
\end{remark}

\begin{remark}[Sharpness]
One might initially feel dissatisfied by the fact that Theorem~\ref{thm:mainthm} does not apply to any Orlicz spaces that are ``close'' to $\textup{L}^1(\mathbb{R}^d)$ or $\textup{L}^{\infty}(\mathbb{R}^d)$, as such endpoint estimates are often interesting in harmonic analysis.
However, here we have every right to question the mere possibility of endpoint estimates, because already a more basic result on semigroups \cite{CialdeaMazya21} fails without the very restrictive assumption \eqref{eq:CMfunctional}, which transforms into \eqref{eq:CMLp} and \eqref{eq:defofexpp} for a finite $p$.
Indeed, our main estimate \eqref{eq:bilinorl} does not allow such endpoint generalizations either. Let us give a sketchy argument to support this claim.

Suppose that $\textup{L}^\Phi$ lies ``at the end'' of the $\textup{L}^p$ range for $p\in[2,\infty)$ in the sense that all these $\textup{L}^p$ spaces are interpolation spaces for linear operators between $\textup{L}^2$ and $\textup{L}^\Phi$.
Also suppose that an estimate of type \eqref{eq:bilinorl} holds for this Young function $\Phi$ and for the very special matrix-functions
\[ A=e^{i\phi}I_d,\quad B=e^{-i\phi}I_d,\quad \phi\in(-\pi/2,\pi/2). \]
Combining Remark~\ref{rem:duality} with considerations from \cite[Section~1.6]{CarbonaroDragicevic20}, we see that this estimate would imply
\[ \sup_{t\in(0,\infty)} \big\| \exp(t e^{i\phi} \Delta_d) \big\|_{\textup{L}^\Phi\to\textup{L}^\Phi} \leqslant C_{\Phi,\phi}, \]
where $\Delta_d$ is the $d$-dimensional Laplacian and $C_{\Phi,\phi}$ is a finite constant.
Interpolation gives
\begin{equation}\label{eq:Lpsharpness}
\sup_{t\in(0,\infty)} \big\| \exp(t e^{i\phi} \Delta_d) \big\|_{\textup{L}^p\to\textup{L}^p} \leqslant C_{p,\phi}
\end{equation}
for every $p\in[2,\infty)$ with a constant $C_{p,\phi}$ depending on $p$ and $\phi$, but independent of the ambient dimension $d$.
However, \cite[Theorem~6.2]{CarbonaroDragicevic20} evaluates the left hand side of \eqref{eq:Lpsharpness} and shows that it blows up as $d\to\infty$ whenever $|\phi|>\arccos|1-2/p|$.
Whichever conditions we impose on our complex matrix functions in order to have a desired Orlicz-space estimate, we expect them to be satisfied at least for some $\phi\neq0$, but then we arrive at a contradiction by choosing a sufficiently large $p$.
A similar argument applies to Orlicz spaces that lie at the left end of the $\textup{L}^p$ range for $p\in(1,2]$.
\end{remark}

\begin{remark}[Bilinear vs.\@ multilinear]
Let us conclude with a remark that this paper is very bi-(sub)li\-ne\-ar in nature. It benefited from concentrating on estimates that simultaneously involve two complementary Young functions, $\Phi$ and ${\Phi_{\ast}}$.
A recent paper by Carbonaro, Dragi\v{c}evi\'{c}, and the present authors \cite{CarbonaroDragicevicKovacSkreb21} studied trilinear embeddings in $\textup{L}^p$ spaces.
It is unclear to us how to even formulate any Orlicz-space multi-(sub)li\-ne\-ar embeddings, as we do not know how to meaningfully define convex conjugation for more than two Young functions.
\end{remark}

%%%%%%%%%%%%%%%%%%%%%%%%%%%%%%%%%%%%%%%%%%%%%%%%

\section{Generalized Hessians}
\label{sec:Hessians}
A quantity introduced by Carbonaro and Dragi\v{c}evi\'{c} in \cite[Subsection~2.2]{CarbonaroDragicevic20} will play a crucial role later in the proof.
For $\mathfrak{X}\colon\mathbb{C}^2\to[0,\infty)$, $A,B\in\mathbb{C}^{d\times d}$, $(u,v)\in\mathbb{C}^2$, and $(\zeta,\eta)\in(\mathbb{C}^d)^2$
we define the \emph{generalized Hessian} of $\mathfrak{X}$ with respect to $A,B$,
\[ H^{A,B}_{\mathfrak{X}}[(u,v);(\zeta,\eta)], \]
as the standard inner product of
\[ \big(\textup{Hess}(\mathfrak{X};(u,v)) \otimes I_{d}\big)
\begin{bmatrix}\mathop{\textup{Re}}\zeta \\ \mathop{\textup{Im}}\zeta \\ \mathop{\textup{Re}}\eta \\ \mathop{\textup{Im}}\eta \end{bmatrix} \in(\mathbb{R}^d)^4 \]
and
\[ \begin{bmatrix}\mathop{\textup{Re}}A & -\mathop{\textup{Im}}A & \mathbf{0} & \mathbf{0} \\ \mathop{\textup{Im}}A & \mathop{\textup{Re}}A & \mathbf{0} & \mathbf{0} \\
\mathbf{0} & \mathbf{0} & \mathop{\textup{Re}}B & -\mathop{\textup{Im}}B \\ \mathbf{0} & \mathbf{0} & \mathop{\textup{Im}}B & \mathop{\textup{Re}}B \end{bmatrix}
\begin{bmatrix}\mathop{\textup{Re}}\zeta \\ \mathop{\textup{Im}}\zeta \\ \mathop{\textup{Re}}\eta \\ \mathop{\textup{Im}}\eta \end{bmatrix} \in(\mathbb{R}^d)^4. \]
Here one has to interpret $\textup{Hess}(\mathfrak{X};(u,v))$ as the $4\times 4$ real Hessian matrix of the function
\[ \mathbb{R}^4\to\mathbb{R}, \quad (u_r,u_i,v_r,v_i) \mapsto \mathfrak{X}(u_r+iu_i,v_r+iv_i). \]
Operation $\otimes$ is the \emph{Kronecker} (a.k.a.\@ \emph{tensor}) \emph{product} of matrices.
We also introduce
\[ \widetilde{H}^{A,B}_{\mathfrak{X}}[(u,v);(\zeta,\eta)] := H^{A,B}_{\mathfrak{X}}\Big[(u,v);\Big(\frac{u}{|u|}\zeta,\frac{v}{|v|}\eta\Big)\Big], \]
as the replacements
\begin{equation}\label{eq:replacements}
\zeta\rightarrow\frac{u}{|u|}\zeta,\quad \eta\rightarrow\frac{v}{|v|}\eta,
\end{equation}
will later significantly simplify numerous expressions.

The following lemma is much in the spirit of computations from \cite{CarbonaroDragicevic20} and \cite{CarbonaroDragicevicKovacSkreb21}. However, in those papers properties of power functions $s\mapsto |s|^p$ are much appreciated, while here we will be dealing with more general nonlinear functions.

\begin{lemma}\label{lm:hessians}
If we define
\begin{align*}
\mathfrak{X}_1(u,v) & := P(|u|) + Q(|v|), \\
\mathfrak{X}_2(u,v) & := |u|^2 R(|v|)
\end{align*}
for some $\textup{C}^2$ functions $P,Q,R\colon(0,\infty)\to\mathbb{R}$, then the following formulas hold for any $A,B\in\mathbb{C}^{d\times d}$, $(u,v)\in(\mathbb{C}\setminus\{0\})^2$, and $(\zeta,\eta)\in(\mathbb{C}^d)^2$:
{\allowdisplaybreaks
\begin{align*}
& \widetilde{H}^{A,B}_{\mathfrak{X}_1}[(u,v);(\zeta,\eta)] \\
& = \mathop{\textup{Re}}\bigg\langle A\zeta, \ \frac{1}{2}\Big(P''(|u|)+\frac{P'(|u|)}{|u|}\Big) \zeta + \frac{1}{2}\Big(P''(|u|)-\frac{P'(|u|)}{|u|}\Big) \bar{\zeta} \bigg\rangle_{\mathbb{C}^d} \\
& \quad + \mathop{\textup{Re}}\bigg\langle B\eta, \ \frac{1}{2}\Big(Q''(|v|)+\frac{Q'(|v|)}{|v|}\Big) \eta + \frac{1}{2}\Big(Q''(|v|)-\frac{Q'(|v|)}{|v|}\Big) \bar{\eta} \bigg\rangle_{\mathbb{C}^d}, \\
& \widetilde{H}^{A,B}_{\mathfrak{X}_2}[(u,v);(\zeta,\eta)] \\
%& = \mathop{\textup{Re}}\Big\langle A\zeta, \ 2R(|v|) \zeta + 2|u| R'(|v|) \mathop{\textup{Re}}\eta  \Big\rangle_{\mathbb{C}^d} \\
& = \mathop{\textup{Re}}\Big\langle A\zeta, \ 2R(|v|) \zeta + |u| R'(|v|) \big(\eta + \bar{\eta}\big)  \Big\rangle_{\mathbb{C}^d} \\
%& \quad + \mathop{\textup{Re}}\bigg\langle B\eta, \ 2|u| R'(|v|) \mathop{\textup{Re}}\zeta
%+ |u|^2 R''(|v|) \mathop{\textup{Re}}\eta + i |u|^2 \frac{R'(|v|)}{|v|} \mathop{\textup{Im}}\eta \bigg\rangle_{\mathbb{C}^d}.\\
& \quad + \mathop{\textup{Re}}\bigg\langle B\eta, \ |u| R'(|v|) \big(\zeta+\bar{\zeta}\big)
+ \frac{|u|^2}{2}\Big(R''(|v|)+\frac{R'(|v|)}{|v|}\Big) \eta + \frac{|u|^2}{2}\Big(R''(|v|)-\frac{R'(|v|)}{|v|}\Big) \bar{\eta} \bigg\rangle_{\mathbb{C}^d}.
\end{align*}}
\end{lemma}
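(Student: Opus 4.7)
The plan is to exploit the rotational invariance of $\mathfrak{X}_1,\mathfrak{X}_2$ in each complex argument. Write $u=|u|e^{i\theta_u}$, $v=|v|e^{i\theta_v}$ and let $R_\theta$ denote the real $2\times 2$ rotation corresponding to multiplication by $e^{i\theta}$ on $\mathbb{C}\simeq\mathbb{R}^2$. Since both $\mathfrak{X}_1$ and $\mathfrak{X}_2$ are invariant under $u\mapsto e^{i\alpha}u$ and $v\mapsto e^{i\beta}v$ separately, the chain rule yields
\[
\textup{Hess}(\mathfrak{X};(u,v)) \;=\; (R_{\theta_u}\oplus R_{\theta_v})\,\textup{Hess}(\mathfrak{X};(|u|,|v|))\,(R_{\theta_u}\oplus R_{\theta_v})^T.
\]
On the vector side, the tilde substitutions \eqref{eq:replacements} send $\zeta$ to $e^{i\theta_u}\zeta$, so the real column $[\mathop{\textup{Re}}\tilde\zeta,\mathop{\textup{Im}}\tilde\zeta]^T$ equals $R_{\theta_u}[\mathop{\textup{Re}}\zeta,\mathop{\textup{Im}}\zeta]^T$; because the scalar $e^{i\theta_u}$ commutes with $A$, likewise $[\mathop{\textup{Re}}(A\tilde\zeta),\mathop{\textup{Im}}(A\tilde\zeta)]^T=R_{\theta_u}[\mathop{\textup{Re}}(A\zeta),\mathop{\textup{Im}}(A\zeta)]^T$, and analogously for $\eta$, $B\eta$. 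Tensoring with $I_d$ and substituting into the definition of $H^{A,B}_{\mathfrak{X}}$, the rotations cancel against their transposes, reducing the task to computing the generalized Hessian at the ``diagonal'' point $(|u|,|v|)$ with the original $(\zeta,\eta)$.

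Second, I would compute the $4\times 4$ Hessian at $(u_r,u_i,v_r,v_i)=(|u|,0,|v|,0)$ by direct differentiation. For $\mathfrak{X}_1$ this is block diagonal with $u$-block $\mathrm{diag}(P''(|u|),P'(|u|)/|u|)$ and $v$-block $\mathrm{diag}(Q''(|v|),Q'(|v|)/|v|)$. For $\mathfrak{X}_2$ all entries containing an odd number of ``imaginary'' subscripts ($\partial_{u_r u_i}^2,\partial_{v_r v_i}^2,\partial_{u_i v_r}^2,\partial_{u_r v_i}^2,\partial_{u_i v_i}^2$) vanish at the diagonal point; the surviving entries are $\partial_{u_r u_r}^2=\partial_{u_i u_i}^2=2R(|v|)$, $\partial_{v_r v_r}^2=|u|^2 R''(|v|)$, $\partial_{v_i v_i}^2=|u|^2 R'(|v|)/|v|$, and the cross term $\partial_{u_r v_r}^2=2|u|R'(|v|)$.

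Finally, I would pair $[\textup{Hess}\otimes I_d]\,[\mathop{\textup{Re}}\zeta,\mathop{\textup{Im}}\zeta,\mathop{\textup{Re}}\eta,\mathop{\textup{Im}}\eta]^T$ with $[\mathop{\textup{Re}}(A\zeta),\mathop{\textup{Im}}(A\zeta),\mathop{\textup{Re}}(B\eta),\mathop{\textup{Im}}(B\eta)]^T$ and recognize the resulting sums of real dot products as complex inner products via the elementary identity
\[
\tfrac{\alpha+\beta}{2}\zeta + \tfrac{\alpha-\beta}{2}\bar\zeta \;=\; \alpha\mathop{\textup{Re}}\zeta + i\beta\mathop{\textup{Im}}\zeta\qquad(\alpha,\beta\in\mathbb{R}),
\]
combined with $\mathop{\textup{Re}}\langle A\zeta,w\rangle_{\mathbb{C}^d}=\mathop{\textup{Re}}(A\zeta)\cdot w$ and $\mathop{\textup{Re}}\langle A\zeta,iw\rangle_{\mathbb{C}^d}=\mathop{\textup{Im}}(A\zeta)\cdot w$ for real $w\in\mathbb{R}^d$. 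Applied with $\alpha=P''(|u|)$, $\beta=P'(|u|)/|u|$, this immediately produces the $A\zeta$-term of the $\mathfrak{X}_1$-formula, and likewise for the $B\eta$-term. For $\mathfrak{X}_2$ the same identities convert the $R(|v|)$-contribution into an $A\zeta$-pairing with $2R(|v|)\zeta$, the $R''(|v|)$ and $R'(|v|)/|v|$ contributions into a $B\eta$-pairing with $|u|^2R''(|v|)\mathop{\textup{Re}}\eta+i|u|^2(R'(|v|)/|v|)\mathop{\textup{Im}}\eta$, while the symmetric cross term $2|u|R'(|v|)\,\partial_{u_rv_r}$ distributes into the two ``mixed'' summands $2|u|R'(|v|)\mathop{\textup{Re}}\eta$ and $2|u|R'(|v|)\mathop{\textup{Re}}\zeta$ inside the respective pairings. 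The main obstacle is purely organizational: the $\mathfrak{X}_2$-Hessian couples $u$ and $v$, so one must track many real terms simultaneously and combine them into the compact $A\zeta$/$B\eta$ pairings of the statement without losing a sign or a factor.
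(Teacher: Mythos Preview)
Your proof is correct and follows essentially the same route as the paper: both compute the real $4\times4$ Hessian, use the rotational invariance in each complex variable to strip off the phases $u/|u|$ and $v/|v|$ (the paper does this by factoring out $\mathrm{diag}(u/|u|\,I_d,\,v/|v|\,I_d)$ after the computation, while you invoke it at the outset to reduce to the point $(|u|,|v|)$), and then rewrite the resulting real dot products as $\mathop{\textup{Re}}\langle\cdot,\cdot\rangle_{\mathbb{C}^d}$. One small slip: your ``odd number of imaginary subscripts'' heuristic is misstated, since $\partial^2_{u_iv_i}$ has two such subscripts yet vanishes while $\partial^2_{u_iu_i}$ also has two and survives; nevertheless, the explicit list of vanishing and surviving entries you give is correct, so the argument goes through.
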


\begin{proof}
In the case of $\mathfrak{X}_1$ the Hessian matrix $\textup{Hess}(\mathfrak{X}_1;(u,v))$ in the variables $u_r,u_i,v_r,v_i$ is easily evaluated to be the direct sum of matrices
\[ \begin{bmatrix}
P''(|u|)\frac{u_r^2}{|u|^2} + \frac{P'(|u|)}{|u|}\frac{u_i^2}{|u|^2} & (P''(|u|)-\frac{P'(|u|)}{|u|})\frac{u_r u_i}{|u|^2} \\
(P''(|u|)-\frac{P'(|u|)}{|u|})\frac{u_r u_i}{|u|^2} & \frac{P'(|u|)}{|u|}\frac{u_r^2}{|u|^2} + P''(|u|)\frac{u_i^2}{|u|^2}
\end{bmatrix} \]
and
\[ \begin{bmatrix}
Q''(|v|)\frac{v_r^2}{|v|^2} + \frac{Q'(|v|)}{|v|}\frac{v_i^2}{|v|^2} & (Q''(|v|)-\frac{Q'(|v|)}{|v|})\frac{v_r v_i}{|v|^2} \\
(Q''(|v|)-\frac{Q'(|v|)}{|v|})\frac{v_r v_i}{|v|^2} & \frac{Q'(|v|)}{|v|}\frac{v_r^2}{|v|^2} + Q''(|v|)\frac{v_i^2}{|v|^2}
\end{bmatrix}. \]
It is then tensored with the identity matrix $I_d$, multiplied with the column-vector
\begin{equation}\label{eq:auxcolvec1}
\begin{bmatrix} \mathop{\textup{Re}}\zeta & \mathop{\textup{Im}}\zeta & \mathop{\textup{Re}}\eta & \mathop{\textup{Im}}\eta \end{bmatrix}^{\textup{T}},
\end{equation}
and the result is interpreted as a vector in $(\mathbb{C}^d)^2$, rather than in $(\mathbb{R}^d)^4$:
\begin{align*}
& \begin{bmatrix}
\frac{u}{|u|} P''(|u|) \mathop{\textup{Re}}(\frac{\bar{u}}{|u|}\zeta) + i\frac{u}{|u|} \frac{P'(|u|)}{|u|} \mathop{\textup{Im}}(\frac{\bar{u}}{|u|}\zeta) \\
\frac{v}{|v|} Q''(|v|) \mathop{\textup{Re}}(\frac{\bar{v}}{|v|}\eta) + i\frac{v}{|v|} \frac{Q'(|v|)}{|v|} \mathop{\textup{Im}}(\frac{\bar{v}}{|v|}\eta)
\end{bmatrix} \\
& = \begin{bmatrix} \frac{u}{|u|} I_d & \mathbf{0} \\ \mathbf{0} & \frac{v}{|v|} I_d \end{bmatrix}
\begin{bmatrix}
\frac{1}{2}(P''(|u|) + \frac{P'(|u|)}{|u|}) (\frac{\bar{u}}{|u|}\zeta) +
\frac{1}{2}(P''(|u|) - \frac{P'(|u|)}{|u|}) \overline{(\frac{\bar{u}}{|u|}\zeta)} \\
\frac{1}{2}(Q''(|v|) + \frac{Q'(|v|)}{|v|}) (\frac{\bar{v}}{|v|}\eta) +
\frac{1}{2}(Q''(|v|) - \frac{Q'(|v|)}{|v|}) \overline{(\frac{\bar{v}}{|v|}\eta)}
\end{bmatrix} .
\end{align*}
Taking the inner product
\[ \langle\cdot,\cdot\rangle_{(\mathbb{R}^d)^4} = \mathop{\textup{Re}} \langle\cdot,\cdot\rangle_{(\mathbb{C}^d)^2} \]
with the vector
\begin{equation}\label{eq:auxcolvec2}
\begin{bmatrix} A & \mathbf{0} \\ \mathbf{0} & B \end{bmatrix} \begin{bmatrix} \zeta \\ \eta \end{bmatrix}
= \begin{bmatrix} A \zeta \\ B \eta \end{bmatrix}
= \begin{bmatrix} \frac{u}{|u|} I_d & \mathbf{0} \\ \mathbf{0} & \frac{v}{|v|} I_d \end{bmatrix}
\begin{bmatrix} A (\frac{\bar{u}}{|u|}\zeta) \\ B (\frac{\bar{v}}{|v|}\eta) \end{bmatrix}
\end{equation}
we obtain a formula for $H^{A,B}_{\mathfrak{X}_1}[(u,v);(\zeta,\eta)]$.
It remains to change the variables as in \eqref{eq:replacements}.

In the case of $\mathfrak{X}_2$ the Hessian matrix $\textup{Hess}(\mathfrak{X}_2;(u,v))$ is
\[ \begin{bmatrix}
2R(|v|)
& 0
& 2 u_r R'(|v|)\frac{v_r}{|v|}
& 2 u_r R'(|v|)\frac{v_i}{|v|} \\
0
& 2R(|v|)
& 2 u_i R'(|v|)\frac{v_r}{|v|}
& 2 u_i R'(|v|)\frac{v_i}{|v|} \\
2 u_r R'(|v|)\frac{v_r}{|v|}
& 2 u_i R'(|v|)\frac{v_r}{|v|}
& |u|^2 (R''(|v|)\frac{v_r^2}{|v|^2} + \frac{R'(|v|)}{|v|}\frac{v_i^2}{|v|^2})
& |u|^2 (R''(|v|)-\frac{R'(|v|)}{|v|})\frac{v_r v_i}{|v|^2} \\
2 u_r R'(|v|)\frac{v_i}{|v|}
& 2 u_i R'(|v|)\frac{v_i}{|v|}
& |u|^2 (R''(|v|)-\frac{R'(|v|)}{|v|})\frac{v_r v_i}{|v|^2}
& |u|^2 (\frac{R'(|v|)}{|v|}\frac{v_r^2}{|v|^2} + R''(|v|)\frac{v_i^2}{|v|^2})
\end{bmatrix}. \]
This matrix tensored with $I_d$ and multiplied with the column-vector \eqref{eq:auxcolvec1} gives:
\[ \begin{bmatrix} \frac{u}{|u|} I_d & \mathbf{0} \\ \mathbf{0} & \frac{v}{|v|} I_d \end{bmatrix}
\begin{bmatrix}
2R(|v|) \frac{\bar{u}}{|u|}\zeta + 2 |u| R'(|v|) \mathop{\textup{Re}}(\frac{\bar{v}}{|v|}\eta) \\
2 |u| R'(|v|) \mathop{\textup{Re}}(\frac{\bar{u}}{|u|}\zeta) + |u|^2 R''(|v|) \mathop{\textup{Re}}(\frac{\bar{v}}{|v|}\eta) + i |u|^2 \frac{R'(|v|)}{|v|}  \mathop{\textup{Im}}(\frac{\bar{v}}{|v|}\eta)
\end{bmatrix}. \]
The desired formula follows by taking the inner product with \eqref{eq:auxcolvec2} and substituting \eqref{eq:replacements}.
\end{proof}

%%%%%%%%%%%%%%%%%%%%%%%%%%%%%%%%%%%%%%%%%%%%%%%%

\section{The Bellman function}
\label{sec:Bellmanfn}
Suppose that $\Phi$ and ${\Phi_{\ast}}$ are as in the formulation of Theorem~\ref{thm:mainthm}, i.e., they fulfil conditions \eqref{eq:PhiPsicond1}--\eqref{eq:Phicond3} and, thus, also conditions/properties \eqref{eq:bijections}--\eqref{eq:intPsiupper}.
In particular, by recalling \eqref{eq:bijections} we observe that the surface
\[ \mathcal{Y} := \{(u,v)\in\mathbb{C}^2 : |v|=\Phi'(|u|)\} = \{(u,v)\in\mathbb{C}^2 : |u|=\Phi_{\ast}'(|v|)\} \]
splits its complement into two regions: the ``lower'' region
\[ \mathcal{Y}_{\downarrow} := \{(u,v)\in\mathbb{C}^2 : |v|<\Phi'(|u|)\} = \{(u,v)\in\mathbb{C}^2 : |u|>\Phi_{\ast}'(|v|)\} \]
and the ``upper'' region
\[ \mathcal{Y}_{\uparrow} := \{(u,v)\in\mathbb{C}^2 : |v|>\Phi'(|u|)\} = \{(u,v)\in\mathbb{C}^2 : |u|<\Phi_{\ast}'(|v|)\}. \]

Also suppose that $A,B\colon\mathbb{R}^d\to\mathbb{C}^{d\times d}$ are matrix functions as in the statement of Theorem~\ref{thm:mainthm}.
By Remark~\ref{rem:simplythep} from Subsection~\ref{subsec:divform} we have
\[ \Delta_{\Phi}(A) = \Delta_p(A),\quad \Delta_{{\Phi_{\ast}}}(B) = \Delta_p(B) \]
for $p=\tilde{M}+1$.

The so-called \emph{Bellman function method} relies on boundedness and convexity properties of a carefully chosen auxiliary function; see the seminal paper by Nazarov, Treil, and Volberg \cite{NazarovTreilVolberg99} and classical expository papers \cite{NazarovTreil96} and \cite{NazarovTreilVolberg01}.
We need to construct a Bellman function relevant to the present problem. Let us define $\mathfrak{X}\colon\mathbb{C}^2\to[0,\infty)$ as
\begin{equation}\label{eq:mainBellman}
\mathfrak{X}(u,v) := \begin{cases}
{\displaystyle (1+\delta)\big(\Phi(|u|) + {\Phi_{\ast}}(|v|)\big) +  \delta |u|^2 \int_{0}^{|u|} \frac{\Phi'(s)\,\textup{d}s}{s^2}} & \text{ for } (u,v)\in\mathcal{Y}_{\downarrow}\cup\mathcal{Y}, \\
{\displaystyle \Phi(|u|) + {\Phi_{\ast}}(|v|) + \delta |u|^2\int_{0}^{|v|}\frac{\textup{d}s}{\Phi_{\ast}'(s)}} & \text{ for } (u,v)\in\mathcal{Y}_{\uparrow},
\end{cases}
\end{equation}
where
\begin{equation}\label{eq:choiceofdelta}
\delta := \frac{\tilde{m}-1}{\tilde{m}} \min\Big\{ \frac{\Delta_{p}(A)}{8\Lambda(A)}, \frac{\Delta_{p}(B)}{4\Lambda(B)}, \frac{\lambda(A)\Delta_{p}(B)}{100\max\{ \Lambda(A)^2,\Lambda(B)^2 \}} \Big\}.
\end{equation}
Note that $s\mapsto\Phi'(s)/s^2$ and $s\mapsto1/\Phi_{\ast}'(s)$ are integrable in a neighborhood of $s=0$, thanks to \eqref{eq:intPhiupper} and \eqref{eq:intPsiupper}.

In the particular case of mutually conjugate Lebesgue space Young functions \eqref{eq:LpLq}, formula \eqref{eq:mainBellman} simplifies as
\begin{equation}\label{eq:particularBellman}
\frac{|u|^p}{p} + \frac{|v|^q}{q} + \frac{\delta}{2-q}\times \begin{cases}
\frac{2}{p}|u|^p + \big(\frac{2}{q}-1\big)|v|^q & \text{ for } |u|^p\geqslant |v|^q, \\
|u|^2 |v|^{2-q} & \text{ for } |u|^p<|v|^q,
\end{cases}
\end{equation}
which is just a minor modification of (a two-variable version of) the Bellman function from \cite[Section~8]{NazarovTreil96}.
At the first sight there seem to be many possibilities for $\mathfrak{X}$ generalizing \eqref{eq:particularBellman}, but inspection of desired properties below narrows down the choice severely.
Thus, even if the above choice for $\mathfrak{X}$ might not be the most obvious one, we find it necessary and somewhat canonical; see Remark~\ref{rem:Bellmannmotivation} for a reasoning that lead us naturally to the above formula.

The main task is to prove several crucial estimates for this function $\mathfrak{X}$.
%Following Nazarov and Treil \cite{NazarovTreil96},
Carbonaro and Dragi\v{c}evi\'{c} \cite{CarbonaroDragicevic20} established the required estimates in the particular case of power functions \eqref{eq:LpLq}, namely for a minor variant of \eqref{eq:particularBellman}.
This required a series of upgrades from its basic properties discussed already in \cite{NazarovTreil96} to their nontrivial extensions and additions addressed gradually in \cite{DragicevicVolberg12,CarbonaroDragicevic17,CarbonaroDragicevic20}.
The power functions played an important role in these papers, as emphasized already in the title of \cite{CarbonaroDragicevic20}.
Here we need to redo the calculations, as we are now working with more general $\Phi$ and ${\Phi_{\ast}}$.

We begin with some smoothness of $\mathfrak{X}$.

\begin{lemma}\label{lm:XisC1}
The function $\mathfrak{X}$ is $\textup{C}^1$ on the whole domain $\mathbb{C}^2\equiv\mathbb{R}^4$. Moreover, it is $\textup{C}^2$ on $(\mathbb{C}\setminus\{0\})^2 \setminus\mathcal{Y}$ and its second-order partial derivatives are locally integrable, i.e., they are integrable on every bounded measurable subset of $(\mathbb{C}\setminus\{0\})^2 \setminus\mathcal{Y}$.
\end{lemma}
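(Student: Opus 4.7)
The plan is to verify the three assertions—$\textup{C}^1$-ness on $\mathbb{R}^4$, $\textup{C}^2$-ness off the axes and $\mathcal{Y}$, and local integrability of the Hessian—separately. The $\textup{C}^2$-claim on $(\mathbb{C}\setminus\{0\})^2\setminus\mathcal{Y}$ is the easiest: on each open region $\mathcal{Y}_\downarrow$, $\mathcal{Y}_\uparrow$, the function $\mathfrak{X}$ is built from $\Phi(|u|)$, $\Psi(|v|)$, the smooth factor $|u|^2$, and one of the radial integrals $\int_0^{|u|}\Phi'(s)s^{-2}\,\textup{d}s$, $\int_0^{|v|}\Psi'(s)^{-1}\,\textup{d}s$. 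Both are finite and $\textup{C}^2$ in their upper limits thanks to \eqref{eq:intPhiupper}--\eqref{eq:intPsiupper} and \eqref{eq:PhiPsicond2}, giving $\textup{C}^2$-ness of each branch on its region once both $|u|$ and $|v|$ are positive.

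The main task is to match $\mathfrak{X}$ and its gradient across $\mathcal{Y}$. Writing $F_\downarrow,F_\uparrow\colon(0,\infty)^2\to\mathbb{R}$ for the radial representatives of the two branches in \eqref{eq:mainBellman}, so that $\mathfrak{X}(u,v)=F_\bullet(|u|,|v|)$, it suffices to check $F_\downarrow(r,\Phi'(r))=F_\uparrow(r,\Phi'(r))$ and the corresponding equalities for $\partial_r F$ and $\partial_t F$ at every $r>0$. The crucial identity comes from substituting $s=\Phi'(\rho)$ in the upper-branch integral and integrating by parts,
\[ \int_0^{\Phi'(r)}\frac{\textup{d}s}{\Psi'(s)}
= \int_0^r \frac{\Phi''(\rho)}{\rho}\,\textup{d}\rho
= \frac{\Phi'(r)}{r} + \int_0^r \frac{\Phi'(\rho)}{\rho^2}\,\textup{d}\rho, \]
with the boundary term at $0$ vanishing by \eqref{eq:Phicond1}. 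Combining this with the Young equality $r\Phi'(r)=\Phi(r)+\Psi(\Phi'(r))$ and the duality $\Psi'(\Phi'(r))=r$ from \eqref{eq:bijections} makes the values and both partial derivatives of the two branches agree on $\mathcal{Y}$; since $\mathfrak{X}$ depends on $(u,v)$ only through $(|u|,|v|)$, this propagates via the chain rule to the full $\mathbb{R}^4$-gradient at every $\mathcal{Y}$-point with $|u|,|v|>0$. At the remaining boundary points—namely $(0,v)$ with $v\neq 0$, which lies in $\mathcal{Y}_\uparrow$, and $(u,0)$ with $u\neq 0$, which lies in $\mathcal{Y}_\downarrow$—the $\textup{C}^1$-extension follows from $\lim_{s\to 0^+}\Phi'(s)/s=0$ and $\Psi'(0)=0$ applied to $\Phi(|u|)$ and $\Psi(|v|)$. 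At the origin, the a priori bounds \eqref{eq:intPhiupper}--\eqref{eq:intPsiupper} force all first partials of either branch to vanish, so the gradients again match.

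For local integrability, each second-order partial on the open regions is a bounded linear combination of $\Phi''(|u|)$, $\Phi'(|u|)/|u|$, $\Psi''(|v|)$, $\Psi'(|v|)/|v|$, the two radial integrals above, and smooth factors of $|u|$ and $|v|$. Passing to polar coordinates in $u$ and in $v$ reduces integrability on any bounded set to one-dimensional estimates such as $\int_0^\varepsilon r\Psi''(r)\,\textup{d}r=\varepsilon\Psi'(\varepsilon)-\Psi(\varepsilon)<\infty$, and the bounds $s\Phi''(s)\leqslant\tilde{M}\Phi'(s)$ and $s\Psi''(s)\leqslant\Psi'(s)/\tilde{m}$ from \eqref{eq:Phicond3}, \eqref{eq:Psicond3} together with \eqref{eq:intPhiupper}--\eqref{eq:intPsiupper} control the remaining terms. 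The principal obstacle is the matching step: the relative coefficients of the two $\delta$-summands in \eqref{eq:mainBellman} have been engineered precisely so that the integration by parts above combined with Young's equality simultaneously deliver agreement of values and of both first derivatives on $\mathcal{Y}$.
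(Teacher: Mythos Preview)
Your proposal is correct and follows essentially the same route as the paper: you reduce to the radial representative $\mathfrak{B}(|u|,|v|)$, establish the key integration-by-parts identity
\[
\int_0^{\Phi'(r)}\frac{\textup{d}s}{\Psi'(s)}=\frac{\Phi'(r)}{r}+\int_0^r\frac{\Phi'(\rho)}{\rho^2}\,\textup{d}\rho
\]
(the paper's \eqref{eq:integralcomput}), combine it with Young's equality on the critical curve to match values and both first partials across $\mathcal{Y}$, handle the axes via $\Phi'(0)=\Psi'(0)=0$ and \eqref{eq:intPhiupper}--\eqref{eq:intPsiupper}, and deduce local integrability of the Hessian by passing to polar coordinates and invoking \eqref{eq:Phicond3}, \eqref{eq:Psicond3}. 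The paper carries out the same steps with somewhat more explicit bookkeeping of the six second-order partials of $\mathfrak{B}$, but the argument is the same.
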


\begin{proof}
Define $\mathfrak{B}\colon[0,\infty)^2\to[0,\infty)$ by
\[ \mathfrak{B}(x,y) := \begin{cases}
{\displaystyle (1+\delta)\big(\Phi(x) + {\Phi_{\ast}}(y)\big) +  \delta x^2 \int_{0}^{x} \frac{\Phi'(s)\,\textup{d}s}{s^2}} & \text{ for } y\leqslant\Phi'(x),\text{ i.e., } x\geqslant\Phi_{\ast}'(y),\\
{\displaystyle \Phi(x) + {\Phi_{\ast}}(y) + \delta x^2\int_{0}^{y}\frac{\textup{d}s}{\Phi_{\ast}'(s)}} & \text{ for } y>\Phi'(x),\text{ i.e., } x<\Phi_{\ast}'(y),
\end{cases} \]
where $\delta$ is as in \eqref{eq:choiceofdelta}. Thus, $\mathfrak{X}(u,v)=\mathfrak{B}(|u|,|v|)$. In order to see that $\mathfrak{X}$ is continuous on $\mathbb{C}^2$, it is sufficient to verify that $\mathfrak{B}$ is continuous on $[0,\infty)^2$. Each of the two defining formulas for $\mathfrak{B}$ is clearly continuous on $[0,\infty)^2$, so it remains to see that they coincide on the critical curve
\begin{equation}\label{eq:criticalcurve}
\{(x,y)\in[0,\infty)^2 : y=\Phi'(x)\} = \{(x,y)\in[0,\infty)^2 : x=\Phi_{\ast}'(y)\}.
\end{equation}
On this curve we have
\begin{align}
& x^2\int_{0}^{y}\frac{\textup{d}t}{\Phi_{\ast}'(t)} = x^2\int_{0}^{\Phi'(x)}\frac{\textup{d}t}{\Phi_{\ast}'(t)} = \Big[\begin{array}{c}t=\Phi'(s)\\ \textup{d}t=\Phi''(s)\,\textup{d}s\end{array}\Big] \nonumber \\
& = x^2\int_{0}^{x}\frac{\Phi''(s)\,\textup{d}s}{s} = \Big[\text{integration by parts and \eqref{eq:Phicond1}}\Big] \nonumber \\
& = x y + x^2\int_{0}^{x}\frac{\Phi'(s)\,\textup{d}s}{s^2}
= \Phi(x) + {\Phi_{\ast}}(y) + x^2\int_{0}^{x}\frac{\Phi'(s)\,\textup{d}s}{s^2}. \label{eq:integralcomput}
\end{align}
Here we used the well-known fact that Young's inequality \eqref{eq:Youngsineq} becomes an equality when the pair $(s,t)$ lies on the critical curve, which also easily follows from \eqref{eq:complementary}.
This confirms the continuity of $\mathfrak{B}$ and thus also of $\mathfrak{X}$.

Now we prove that all four first-order partial derivatives of $\mathfrak{X}$ exist and are continuous on $\mathbb{C}^2$.
Since
\begin{equation}\label{eq:XtoB1}
\partial_{u_r}\mathfrak{X}(u_r+iu_i,v_r+iv_i) = \partial_1\mathfrak{B}(|u|,|v|)\frac{u_r}{|u|}
\end{equation}
and similar equalities hold for other derivatives,
it is sufficient to show that $\mathfrak{B}$ is $\textup{C}^1$ on $(0,\infty)^2$, that its partial derivatives $\partial_1\mathfrak{B}(x,y)$, $\partial_2\mathfrak{B}(x,y)$ continuously extend to $[0,\infty)^2$, and that
\begin{align}
\lim_{(0,\infty)^2\ni(x,y)\to(0,y_0)} \partial_1\mathfrak{B}(x,y) & = 0, \label{eq:partlimits1} \\
\lim_{(0,\infty)^2\ni(x,y)\to(x_0,0)} \partial_2\mathfrak{B}(x,y) & = 0 \label{eq:partlimits2}
\end{align}
for any $x_0,y_0\in[0,\infty)$.
In fact, we will prove stronger statements,
\begin{align}
\partial_1\mathfrak{B}(x,y) & = O(x) \quad\text{as } x\to0^+, \text{ locally uniformly in } y\in[0,\infty), \label{eq:partlimits3} \\
\partial_2\mathfrak{B}(x,y) & = O(\Phi_{\ast}'(y)) \quad\text{as } y\to0^+, \text{ locally uniformly in } x\in[0,\infty), \label{eq:partlimits4}
\end{align}
which respectively imply \eqref{eq:partlimits1} and \eqref{eq:partlimits2}.
%The two expressions in the definition of $\mathfrak{B}$ are clearly $\textup{C}^1$ in the open first quadrant $(0,\infty)^2$.
Partial derivatives of $\mathfrak{B}$ are
\begin{equation}\label{eq:parB1}
\partial_1\mathfrak{B}(x,y) = \begin{cases}
{\displaystyle (1+2\delta) \Phi'(x) +  2\delta x \int_{0}^{x} \frac{\Phi'(s)\,\textup{d}s}{s^2}} & \text{ for } 0<y<\Phi'(x),\\
{\displaystyle \Phi'(x) +  2\delta x \int_{0}^{y} \frac{\textup{d}s}{\Phi_{\ast}'(s)}} & \text{ for } 0<x<\Phi_{\ast}'(y)
\end{cases}
\end{equation}
and
\begin{equation}\label{eq:parB2}
\partial_2\mathfrak{B}(x,y) = \begin{cases}
{\displaystyle (1+\delta) \Phi_{\ast}'(y)} & \text{ for } 0<y<\Phi'(x),\\
{\displaystyle \Phi_{\ast}'(y) + \frac{\delta x^2}{\Phi_{\ast}'(y)}} & \text{ for } 0<x<\Phi_{\ast}'(y).
\end{cases}
\end{equation}
The very same computation \eqref{eq:integralcomput} shows that the two cases from \eqref{eq:parB1} coincide on the critical curve \eqref{eq:criticalcurve}.
Also, both cases from \eqref{eq:parB2} simplify on the curve as $(1+\delta)x$, confirming that $\mathfrak{B}$ is $\textup{C}^1$ but, so far, only in the open quadrant $(0,\infty)^2$.
It is clear that \eqref{eq:parB1} and \eqref{eq:parB2} continuously extend to the closed first quadrant $[0,\infty)^2$.
It is also straightforward to verify \eqref{eq:partlimits3} and \eqref{eq:partlimits4} and, while doing so, we need to remember the limit from condition \eqref{eq:Phicond1}.
%Also, we note that \eqref{eq:par2v} is relevant only in the region $x<\Phi_{\ast}'(y)$.
Finally, we recall \eqref{eq:XtoB1} and similar formulas for the other derivatives of $\mathfrak{X}$.

Regarding second-order derivatives, we have
\begin{align*}
& \partial_{u_r}^2\mathfrak{X}(u_r+iu_i,v_r+iv_i) = \partial_1^2\mathfrak{B}(|u|,|v|)\frac{u_r^2}{|u|^2} + \partial_1\mathfrak{B}(|u|,|v|)\frac{u_i^2}{|u|^3}, \\
& \partial_{u_r}\partial_{u_i}\mathfrak{X}(u_r+iu_i,v_r+iv_i) = \partial_1^2\mathfrak{B}(|u|,|v|)\frac{u_r u_i}{|u|^2} - \partial_1\mathfrak{B}(|u|,|v|)\frac{u_r u_i}{|u|^3}, \\
& \partial_{u_r}\partial_{v_r}\mathfrak{X}(u_r+iu_i,v_r+iv_i) = \partial_1\partial_2\mathfrak{B}(|u|,|v|)\frac{u_r v_r}{|u| |v|},
\end{align*}
etc.
Second partial derivatives of $\mathfrak{B}$ are
\begin{align*}
& \partial_1^2\mathfrak{B}(x,y) = \begin{cases}
{\displaystyle (1+2\delta) \Phi''(x) + 2\delta \frac{\Phi'(x)}{x} + 2\delta\int_{0}^{x} \frac{\Phi'(s)\,\textup{d}s}{s^2}} & \text{ for } 0<y<\Phi'(x),\\
{\displaystyle \Phi''(x) +  2\delta \int_{0}^{y} \frac{\textup{d}s}{\Phi_{\ast}'(s)}} & \text{ for } 0<x<\Phi_{\ast}'(y),
\end{cases} \\
& \partial_1\partial_2\mathfrak{B}(x,y) = \begin{cases}
{\displaystyle 0} & \text{ for } 0<y<\Phi'(x),\\
{\displaystyle \frac{2\delta x}{\Phi_{\ast}'(y)}} & \text{ for } 0<x<\Phi_{\ast}'(y),
\end{cases} \\
& \partial_2^2\mathfrak{B}(x,y) = \begin{cases}
{\displaystyle (1+\delta) \Phi_{\ast}''(y)} & \text{ for } 0<y<\Phi'(x),\\
{\displaystyle \Phi_{\ast}''(y) - \frac{\delta x^2 \Phi_{\ast}''(y)}{\Phi_{\ast}'(y)^2}} & \text{ for } 0<x<\Phi_{\ast}'(y).
\end{cases}
\end{align*}
From these expressions and \eqref{eq:intPhiupper}, \eqref{eq:partlimits3}, \eqref{eq:partlimits4}, we easily conclude that $\partial_{u_r}^2\mathfrak{X}(u,v)$, $\partial_{u_r}\partial_{u_i}\mathfrak{X}(u,v)$, $\partial_{u_i}^2\mathfrak{X}(u,v)$ are
\[ O(\Phi''(|u|)) +O\Big(\frac{\Phi'(|u|)}{|u|}\Big) + O(1) \quad\text{as } u\to0, \]
locally uniformly in $v$, that $\partial_{u_r}\partial_{v_r}\mathfrak{X}(u,v)$, $\partial_{u_r}\partial_{v_i}\mathfrak{X}(u,v)$, $\partial_{u_i}\partial_{v_r}\mathfrak{X}(u,v)$, $\partial_{u_i}\partial_{v_i}\mathfrak{X}(u,v)$ are bounded, and that $\partial_{v_r}^2\mathfrak{X}(u,v)$, $\partial_{v_r}\partial_{v_i}\mathfrak{X}(u,v)$, $\partial_{v_i}^2\mathfrak{X}(u,v)$ are
\[ O(\Phi_{\ast}''(|v|)) +O\Big(\frac{\Phi_{\ast}'(|v|)}{|v|}\Big) \quad\text{as } v\to0, \]
locally uniformly in $u$.
Recalling \eqref{eq:Phicond3} and \eqref{eq:Psicond3}
we conclude that the second-order partial derivatives of $\mathfrak{X}$ are integrable in some neighborhood of each point of the two-dimensional coordinate planes $u=0$ and $v=0$.
Since the derivatives are obviously locally bounded outside of these two planes, the proof is complete.
\end{proof}

Let us proceed with an upper bound on $\mathfrak{X}$.

\begin{lemma}
\label{lm:Xupper}
The function $\mathfrak{X}$ satisfies
\begin{equation}\label{eq:Xupperbound}
\mathfrak{X}(u,v) \leqslant 2\max\Big\{1,\frac{M}{\tilde{m}}\Big\} \big(\Phi(|u|) + {\Phi_{\ast}}(|v|)\big)
\end{equation}
for $(u,v)\in\mathbb{C}^2$.
\end{lemma}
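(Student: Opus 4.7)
The plan is to split into the two defining branches of $\mathfrak{X}$ from \eqref{eq:mainBellman} and bound the "extra" term in each branch by a constant multiple of $\Phi(|u|)+\Psi(|v|)$, using the already-established integral inequalities \eqref{eq:intPhiupper}, \eqref{eq:intPsiupper}, the characteristic bounds \eqref{eq:quantities1} and Young's inequality \eqref{eq:Youngsineq}. A crucial preliminary observation is that $\delta$ defined in \eqref{eq:choiceofdelta} is small: since each of the three ratios in the minimum lies in $(0,1]$ (recall $\Delta_p(A)\leqslant\lambda(A)\leqslant\Lambda(A)$, and similarly for $B$), we get the crude but sufficient bound $\delta\leqslant\frac{\tilde m-1}{4\tilde m}$. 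In particular $\delta\leqslant 1/4$ and $\frac{\delta\tilde m}{\tilde m-1}\leqslant\frac14$.

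In the lower region $\mathcal{Y}_{\downarrow}\cup\mathcal{Y}$, I would apply \eqref{eq:intPhiupper} to the integral $\int_0^{|u|}\Phi'(s)s^{-2}\,\textup{d}s$, obtaining $|u|^2\int_0^{|u|}\Phi'(s)s^{-2}\,\textup{d}s\leqslant\frac{|u|\Phi'(|u|)}{\tilde m-1}\leqslant\frac{M\Phi(|u|)}{\tilde m-1}$ by the definition of $M$ in \eqref{eq:quantities1}. This yields
\[
\mathfrak{X}(u,v)\leqslant \Big(1+\delta+\tfrac{\delta M}{\tilde m-1}\Big)\Phi(|u|)+(1+\delta)\Psi(|v|)\leqslant\Big(\tfrac54+\tfrac{M}{4\tilde m}\Big)\Phi(|u|)+\tfrac54\Psi(|v|).
\]
A short case-check verifies $\frac54+\frac{M}{4\tilde m}\leqslant 2\max\{1,M/\tilde m\}$: if $M\leqslant\tilde m$ the left side is at most $\frac32\leqslant 2$, and if $M>\tilde m$ the required inequality rearranges to $M/\tilde m\geqslant 5/7$, which is automatic. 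The coefficient $\frac54$ of $\Psi(|v|)$ is trivially bounded by $2$.

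In the upper region $\mathcal{Y}_{\uparrow}$, I would combine \eqref{eq:intPsiupper} with the defining inequality $|u|<\Psi'(|v|)$ to get
\[
|u|^2\int_{0}^{|v|}\frac{\textup{d}s}{\Psi'(s)}\leqslant\frac{\tilde m}{\tilde m-1}\frac{|u|^2|v|}{\Psi'(|v|)}\leqslant\frac{\tilde m}{\tilde m-1}|u||v|,
\]
and then Young's inequality \eqref{eq:Youngsineq} gives $|u||v|\leqslant\Phi(|u|)+\Psi(|v|)$. Consequently
\[
\mathfrak{X}(u,v)\leqslant\Big(1+\tfrac{\delta\tilde m}{\tilde m-1}\Big)\big(\Phi(|u|)+\Psi(|v|)\big)\leqslant\tfrac54\big(\Phi(|u|)+\Psi(|v|)\big),
\]
which is again dominated by $2\max\{1,M/\tilde m\}(\Phi(|u|)+\Psi(|v|))$. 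There is no real obstacle here; the only subtlety is to notice that the ``extra'' factor $M/\tilde m$ arises solely from the lower-region bound via \eqref{eq:quantities1}, whereas in the upper region the condition $|u|<\Psi'(|v|)$ lets us avoid it entirely and reduce directly to Young's inequality.
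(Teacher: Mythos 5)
Your proof is correct and follows essentially the same route as the paper: split into the two regions, apply \eqref{eq:intPhiupper} (resp.\@ \eqref{eq:intPsiupper}), and use the smallness of $\delta$ from \eqref{eq:choiceofdelta}. The only minor local variation is in $\mathcal{Y}_{\uparrow}$, where the paper goes via $|u|^2|v|/\Psi'(|v|)\leqslant|v|\Psi'(|v|)\leqslant\frac{m}{m-1}\Psi(|v|)$ using \eqref{eq:quantities3} and $m\geqslant2$, while you invoke Young's inequality $|u||v|\leqslant\Phi(|u|)+\Psi(|v|)$ directly after $|u|<\Psi'(|v|)$; both are equally valid and your version is, if anything, a touch more self-contained.
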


\begin{proof}
The estimate will be verified separately in the two regions.

\emph{Region $\mathcal{Y}_{\downarrow}\cup\mathcal{Y}$.}
In this region, \eqref{eq:intPhiupper} and \eqref{eq:quantities1} give
\[ \mathfrak{X}(u,v) \leqslant \Big(1+\delta+\delta\frac{M}{\tilde{m}-1}\Big)\Phi(|u|) + (1+\delta){\Phi_{\ast}}(|v|), \]
so, by $\delta\leqslant(\tilde{m}-1)/100\tilde{m}$, we conclude \eqref{eq:Xupperbound}.

\emph{Region $\mathcal{Y}_{\uparrow}$.}
Here we have, by $|u|<\Phi_{\ast}'(|v|)$, \eqref{eq:intPsiupper}, and \eqref{eq:quantities3},
\begin{align*}
\mathfrak{X}(u,v) & \leqslant \Phi(|u|) + \Big(1 + \delta \frac{\tilde{m}}{\tilde{m}-1} \frac{m}{m-1}\Big) {\Phi_{\ast}}(|v|) \\
& \leqslant \Big(1 + \frac{1}{100}\frac{m}{m-1}\Big) \big(\Phi(|u|) + {\Phi_{\ast}}(|v|)\big)
\end{align*}
and it remains to recall $m\geqslant2$; see \eqref{eq:onmMs}.
\end{proof}

We will also need certain derivative estimates for $\mathfrak{X}$.
Recall that, by writing $z=x+iy \in \mathbb{C}$, we can define operators of complex differentiation:
\[ \partial_z=\frac{\partial_x-i\partial_y}2, \quad
\partial_{\bar z}=\frac{\partial_x+i\partial_y}2. \]

\begin{lemma}\label{lm:Xderiv}
We have
\begin{align*}
|\partial_{\bar u}\mathfrak{X}(u,v)| & \leqslant \max\{\Phi'(|u|),|v|\}, \\
|\partial_{\bar v}\mathfrak{X}(u,v)| & \leqslant \Phi_{\ast}'(|v|)
\end{align*}
for any $(u,v)\in\mathbb{C}^2$.
\end{lemma}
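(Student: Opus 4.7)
The plan is to exploit the radial structure of $\mathfrak{X}$ and reduce to one-dimensional inequalities for the profile $\mathfrak{B}$ introduced in the proof of Lemma~\ref{lm:XisC1}. Since $\mathfrak{X}(u,v)=\mathfrak{B}(|u|,|v|)$ and $\partial_{\bar{z}}|z|=z/(2|z|)$, the chain rule gives $\partial_{\bar u}\mathfrak{X}(u,v) = \tfrac{u}{2|u|}\,\partial_1\mathfrak{B}(|u|,|v|)$ and $\partial_{\bar v}\mathfrak{X}(u,v) = \tfrac{v}{2|v|}\,\partial_2\mathfrak{B}(|u|,|v|)$ on $(\mathbb{C}\setminus\{0\})^2$. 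Hence the lemma reduces to the two pointwise inequalities $|\partial_1\mathfrak{B}(|u|,|v|)|\leqslant 2\max\{\Phi'(|u|),|v|\}$ and $|\partial_2\mathfrak{B}(|u|,|v|)|\leqslant 2\Psi'(|v|)$, which I would then extend to the coordinate axes by the continuity of the derivatives from Lemma~\ref{lm:XisC1}; both sides of each inequality vanish identically on the relevant axis thanks to \eqref{eq:partlimits3}--\eqref{eq:partlimits4}.

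Next, I would verify the two bounds region by region using the explicit expressions \eqref{eq:par1u}--\eqref{eq:par2v} and the integral estimates \eqref{eq:intPhiupper}--\eqref{eq:intPsiupper}. For $|\partial_1\mathfrak{B}|$ in the lower region $\mathcal{Y}_{\downarrow}\cup\mathcal{Y}$, formula \eqref{eq:par1u} combined with \eqref{eq:intPhiupper} produces a bound of the form $(1+2\delta\tilde{m}/(\tilde{m}-1))\Phi'(|u|)$, which lies well below $2\Phi'(|u|)$ thanks to the choice $\delta\leqslant(\tilde{m}-1)/(100\tilde{m})$ built into \eqref{eq:choiceofdelta}. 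In the upper region $\mathcal{Y}_{\uparrow}$, formula \eqref{eq:par2u} contains the extra integral $\int_0^{|v|}\textup{d}s/\Psi'(s)$; bounding it by $(\tilde{m}/(\tilde{m}-1))|v|/\Psi'(|v|)$ via \eqref{eq:intPsiupper} and then using the region-defining inequality $|u|<\Psi'(|v|)$ to cancel the $\Psi'(|v|)$ in the denominator yields $|\partial_1\mathfrak{B}|\leqslant \Phi'(|u|)+(2\delta\tilde{m}/(\tilde{m}-1))|v|$, and since $\Phi'(|u|)<|v|$ throughout $\mathcal{Y}_{\uparrow}$, both terms are comfortably dominated by $\max\{\Phi'(|u|),|v|\}$. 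The bound on $|\partial_2\mathfrak{B}|$ is then easier: \eqref{eq:par1v} already equals $(1+\delta)\Psi'(|v|)$, and \eqref{eq:par2v} is bounded by $(1+\delta)\Psi'(|v|)$ after using $|u|<\Psi'(|v|)$ to control the term $\delta|u|^2/\Psi'(|v|)$.

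The only step I expect to require any care is the estimate for $\partial_1\mathfrak{B}$ in the upper region: the quantity $2\delta|u|\int_0^{|v|}\textup{d}s/\Psi'(s)$ has no obvious \emph{a priori} relation with either $\Phi'(|u|)$ or $|v|$, and one must simultaneously invoke the reciprocal Orlicz bound \eqref{eq:intPsiupper} and the geometric constraint that places $(u,v)$ in $\mathcal{Y}_{\uparrow}$ in order to convert it into a harmless multiple of $|v|$. The smallness of $\delta$ from \eqref{eq:choiceofdelta} then absorbs the resulting numerical constant. Everything else is routine bookkeeping, and no properties of $\Phi$ or $\Psi$ beyond those already used in Lemmata~\ref{lm:XisC1}--\ref{lm:Xupper} are needed.
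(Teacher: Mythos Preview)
Your proposal is correct and follows essentially the same approach as the paper: reduce to bounds on $\partial_1\mathfrak{B}$ and $\partial_2\mathfrak{B}$ via the radial chain rule, then verify the estimates region by region using \eqref{eq:par1u}--\eqref{eq:par2v}, the integral bounds \eqref{eq:intPhiupper}--\eqref{eq:intPsiupper}, the defining inequality $|u|<\Psi'(|v|)$ in $\mathcal{Y}_{\uparrow}$, and the smallness of $\delta$. The only cosmetic difference is that the paper bounds $|\partial_{\bar u}\mathfrak{X}|$ directly (carrying the factor $\tfrac12$) rather than $|\partial_1\mathfrak{B}|$, and in $\mathcal{Y}_{\uparrow}$ it controls the two summands separately by $\max\{\Phi'(|u|),|v|\}$ instead of first replacing $\Phi'(|u|)$ by $|v|$ as you do.
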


\begin{proof}
It is sufficient to verify these estimates in $(\mathbb{C}\setminus\{0\})^2 \setminus\mathcal{Y}$, since continuity of partial derivatives will extend them to the whole domain $\mathbb{C}^2$.
Denote $u=u_r+iu_i$, $v=v_r+iv_i$.
Using the notation from the proof of Lemma~\ref{lm:XisC1} we can write
\begin{align*}
\partial_{\bar u}\mathfrak{X}(u,v)
& = \frac{1}{2}\big(\partial_{u_r}\mathfrak{X}(u_r+iu_i,v_r+iv_i)+ i\partial_{u_i}\mathfrak{X}(u_r+iu_i,v_r+iv_i)\big) \\
& = \frac{1}{2}\Big( \partial_1\mathfrak{B}(|u|,|v|)\frac{u_r}{|u|}
+ i\partial_1\mathfrak{B}(|u|,|v|)\frac{u_i}{|u|} \Big)
= \frac{\partial_1\mathfrak{B}(|u|,|v|) \,u}{2|u|}
\end{align*}
and, analogously,
\[ \partial_{\bar v}\mathfrak{X}(u,v) = \frac{\partial_2\mathfrak{B}(|u|,|v|) \,v}{2|v|}. \]

\emph{Region $\mathcal{Y}_{\downarrow}$.}
We have computed the partial derivatives of $\mathfrak{B}$ in \eqref{eq:parB1} and \eqref{eq:parB2}. Thanks to \eqref{eq:intPhiupper} and $\delta\leqslant(\tilde{m}-1)/100\tilde{m}$ we have
\[ |\partial_{\bar u}\mathfrak{X}(u,v)|
\leqslant \Big(\frac{1}{2}+\delta + \frac{\delta}{\tilde{m}-1}\Big) \Phi'(|u|)
\leqslant \Phi'(|u|). \]
Obviously, also
\[ |\partial_{\bar v}\mathfrak{X}(u,v)| \leqslant \Phi_{\ast}'(|v|). \]

\emph{Region $\mathcal{Y}_{\uparrow}$.}
This time, because of \eqref{eq:intPsiupper} and $|u|<\Phi_{\ast}'(|v|)$,
\[ |\partial_{\bar u}\mathfrak{X}(u,v)|
\leqslant \frac{1}{2}\Phi'(|u|) + \delta \frac{\tilde{m}}{\tilde{m}-1} |v| \leqslant \max\{\Phi'(|u|), |v|\} \]
and
\[ |\partial_{\bar v}\mathfrak{X}(u,v)|
\leqslant \Big(\frac{1}{2} + \frac{\delta}{2}\Big) \Phi_{\ast}'(|v|)
\leqslant \Phi_{\ast}'(|v|). \qedhere \]
\end{proof}

Let us finalize this section with a lower bound on the generalized Hessian of $\mathfrak{X}$.
It can also be thought of as a certain generalized convexity property of $\mathfrak{X}$.
Carbonaro and Dragi\v{c}evi\'{c} \cite{CarbonaroDragicevic20} pointed out that the generalized convexity of their Bellman function was essentially inherited from the generalized convexity of the power functions $u\mapsto|u|^p$ and $v\mapsto|v|^q$, which is, in turn, guaranteed by the $p$-ellipticity conditions $\Delta_p(A)>0$ and $\Delta_p(B)>0$.
Namely, the Bellman function was forced to contain mixed terms too, but a small weight was placed on them, not to ruin convexity properties of the whole expression.
The same philosophy applies here and the desired convexity properties of $\mathfrak{X}$ are essentially inherited from those of its ``main'' terms $u\mapsto\Phi(|u|)$ and $v\mapsto\Phi_{\ast}(|v|)$; also see Remark~\ref{rem:Bellmannmotivation} below.

\begin{lemma}
\label{lm:Xlower}
We have
\begin{equation}\label{eq:Xlowerbound}
H^{A(x),B(x)}_{\mathfrak{X}}[(u,v);(\zeta,\eta)] \geqslant \frac{1}{10} \Big(\frac{\tilde{M}}{\tilde{m}}\frac{\tilde{m}-1}{\tilde{M}-1}\Big)^{1/2} C_{p}(A,B)^{-1} |\zeta| |\eta|
\end{equation}
for $x\in\mathbb{R}^d$, $(u,v)\in(\mathbb{C}\setminus\{0\})^2 \setminus\mathcal{Y}$, and $(\zeta,\eta)\in(\mathbb{C}^d)^2$.
\end{lemma}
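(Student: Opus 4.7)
The proof will analyze the generalized Hessian in the two regions $\mathcal{Y}_{\downarrow}$ and $\mathcal{Y}_{\uparrow}$ separately. A preliminary reduction: for $(u,v)\in(\mathbb{C}\setminus\{0\})^2$ the map $(\zeta,\eta)\mapsto(u|u|^{-1}\zeta,v|v|^{-1}\eta)$ is a Euclidean isometry on $(\mathbb{C}^d)^2$ preserving $|\zeta|$ and $|\eta|$, so the required lower bound for $H^{A,B}_{\mathfrak X}$ is equivalent to the same lower bound for $\widetilde H^{A,B}_{\mathfrak X}$, which I can analyze via the explicit formulas of Lemma~\ref{lm:hessians}. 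In both regions $\mathfrak X$ falls into the form $P(|u|)+Q(|v|)$ (lower) or $\Phi(|u|)+\Psi(|v|)+|u|^2R(|v|)$ (upper) handled there.

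In the lower region I take $P(s)=(1+\delta)\Phi(s)+\delta s^2\int_0^s\Phi'(u)u^{-2}\,du$ and $Q=(1+\delta)\Psi$. The first identity of Lemma~\ref{lm:hessians} splits $\widetilde H$ into a pure $A$-quadratic form in $\zeta$ and a pure $B$-quadratic form in $\eta$. For the $\Psi$-side the antidiagonal-to-diagonal ratio equals $(1-\sigma)/(1+\sigma)$ with $\sigma=|v|\Psi''(|v|)/\Psi'(|v|)\geq 1/\tilde M$ from \eqref{eq:Psicond3}, hence is at most $(\tilde M-1)/(\tilde M+1)=1-2/p$, so $p$-ellipticity of $B$ applies. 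For the $P$-side the analogous ratio works out to $\frac{(1+2\delta)|u|\Phi''(|u|)/\Phi'(|u|)-1}{(1+2\delta)|u|\Phi''(|u|)/\Phi'(|u|)+(1+4\delta)+4\delta|u|F(|u|)/\Phi'(|u|)}$ with $F(s)=\int_0^s\Phi'(u)u^{-2}\,du$; the crucial point is that the summand $4\delta|u|F(|u|)/\Phi'(|u|)$ in the denominator, combined with the lower bound $|u|F(|u|)/\Phi'(|u|)\geq(\tilde M-1)^{-1}$ from \eqref{eq:intPhiupper}, compensates exactly for the $(1+2\delta)$ perturbation in the numerator and keeps this ratio $\leq 1-2/p$, allowing $p$-ellipticity of $A$ too. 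Applying both $p$-ellipticity bounds and then AM--GM, together with the observation $\Phi'(|u|)\Psi'(|v|)\geq|u||v|$ valid in $\mathcal{Y}_\downarrow$ (a consequence of the monotonicity of $s\mapsto\Psi'(s)/s$ implied by \eqref{eq:Psicond3}), produces the required lower bound independently of $(u,v)$, in fact with a constant that is comfortably larger than what the statement demands.

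In the upper region I apply Lemma~\ref{lm:hessians} to $\mathfrak X_1=\Phi(|u|)+\Psi(|v|)$ and to $\mathfrak X_2=\delta|u|^2R(|v|)$ separately, where $R(v)=\int_0^v\Psi'(s)^{-1}\,ds$. The first piece produces the standard $p$-elliptic $A$-form in $\zeta$ and $B$-form in $\eta$. The second piece contributes (i) a positive $A$-form $2\delta R(|v|)\mathop{\textup{Re}}\langle A\zeta,\zeta\rangle$ that combines with the previous $A$-form without enlarging its antidiagonal ratio (so $p$-ellipticity still yields $[\tfrac12(\Phi''+\Phi'/|u|)+2\delta R(|v|)]\Delta_p(A)|\zeta|^2$), (ii) a troublesome $B$-quadratic with antidiagonal-to-diagonal ratio $(1+\sigma)/(1-\sigma)>1$ that rules out $p$-ellipticity and must be controlled in absolute value by $\Lambda(B)\delta|u|^2(|v|\Psi'(|v|))^{-1}|\eta|^2\leq\delta\Lambda(B)\Psi'(|v|)|v|^{-1}|\eta|^2$ after invoking $|u|<\Psi'(|v|)$, and (iii) cross terms bounded by $4\delta|u|R'(|v|)\max\{\Lambda(A),\Lambda(B)\}|\zeta||\eta|\leq 4\delta\max\Lambda|\zeta||\eta|$. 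The constraint $\delta\leq\Delta_p(B)/(4\Lambda(B))$ in \eqref{eq:choiceofdelta} absorbs (ii) into at most half of the $\Psi$-term, leaving a diagonal remnant $\mathcal A|\zeta|^2+\mathcal B|\eta|^2$ with $\mathcal A\geq 2\delta R(|v|)\Delta_p(A)$ and $\mathcal B\gtrsim(\tilde m+1)(4\tilde m)^{-1}\Psi'(|v|)|v|^{-1}\Delta_p(B)$; the bound $R(|v|)\geq\tilde M(\tilde M-1)^{-1}|v|/\Psi'(|v|)$ from \eqref{eq:intPsiupper} then cancels the entire $|v|$-dependence of $\mathcal A\mathcal B$, which becomes a multiple of $\delta\Delta_p(A)\Delta_p(B)$. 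The third constraint $\delta\leq\lambda(A)\Delta_p(B)/(100\max\Lambda^2)$ makes $4\delta\max\Lambda\leq\sqrt{\mathcal A\mathcal B}$, so after AM--GM one obtains $\widetilde H\geq\sqrt{\mathcal A\mathcal B}|\zeta||\eta|$.

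The main obstacle will be the final bookkeeping. The prescribed prefactor $\tfrac{1}{10}(\tilde M(\tilde m-1)/(\tilde m(\tilde M-1)))^{1/2}C_p(A,B)^{-1}$ must be matched, which requires checking each of the three cases of the minimum defining $\delta$ in \eqref{eq:choiceofdelta} and confirming that the resulting $\sqrt{\delta\Delta_p(A)\Delta_p(B)\cdot(\text{constants})}$ dominates the target expression. The factor $(\tilde m-1)/\tilde m$ in the definition of $\delta$ and the numerical constants $8,4,100$ have been calibrated precisely for this tolerance; the combination $\lambda(A)\Delta_p(B)$ inside the third case, together with $\lambda(A)\geq\Delta_p(A)$ and $\max\Lambda\geq\min\lambda$, provides exactly the asymmetry needed so that the square root of the product recovers $\min\Delta_p\min\lambda/\max\Lambda$ up to the factor $\tfrac{1}{10}$.
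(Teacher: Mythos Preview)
Your treatment of the lower region $\mathcal{Y}_\downarrow$ is correct and in fact slightly cleaner than the paper's: where the paper splits off the $\delta$-perturbation terms and controls them crudely via $\Lambda(A)$, you observe that the antidiagonal-to-diagonal ratio of the full $P$-form never exceeds $1-2/p$, thanks precisely to the lower bound $|u|F(|u|)/\Phi'(|u|)\geq(\tilde M-1)^{-1}$ from \eqref{eq:intPhiupper}. Your subsequent use of $\Phi'(|u|)\Psi'(|v|)\geq|u||v|$ on $\mathcal Y_\downarrow$ is also a valid alternative to the paper's $\Phi''(|u|)\Psi''(|v|)\geq1$.

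In the upper region, however, there is a genuine gap. By merging the term $2\delta R(|v|)\mathop{\textup{Re}}\langle A\zeta,\zeta\rangle$ into the $\Phi$-form and applying $p$-ellipticity to the combined expression, you end up with $\mathcal A\geq 2\delta R(|v|)\Delta_p(A)$ rather than $2\delta R(|v|)\lambda(A)$. This is exactly where the argument breaks: the third constraint in \eqref{eq:choiceofdelta} reads $\delta\leq\frac{\tilde m-1}{\tilde m}\,\lambda(A)\Delta_p(B)/(100\max\Lambda^2)$, and since $\lambda(A)$ can be arbitrarily large compared to $\Delta_p(A)$, this bound on $\delta$ does \emph{not} imply $16\delta^2\max\Lambda^2\leq\mathcal A\mathcal B$ when $\mathcal A\mathcal B\sim\delta\,\Delta_p(A)\Delta_p(B)$. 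So the cross terms $4\delta\max\Lambda\,|\zeta||\eta|$ cannot be absorbed, and for the same reason the final constant does not match the target $\tfrac{1}{10}\kappa\,C_p(A,B)^{-1}$ either (try the case $\min\lambda=\lambda(A)$, $\min\Delta_p=\Delta_p(A)$, with $\Delta_p(B)$ small and $\lambda(A)$ large).

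The fix is to do what the paper does: keep $2\delta R(|v|)\mathop{\textup{Re}}\langle A\zeta,\zeta\rangle$ separate and bound it below by $2\delta R(|v|)\lambda(A)|\zeta|^2$ using plain ellipticity, while the $\Phi$-form is bounded below (and then discarded) via $\Delta_p(A)$. With $\mathcal A=2\delta R(|v|)\lambda(A)$ one obtains $\mathcal A\mathcal B\gtrsim\delta\,\lambda(A)\Delta_p(B)\,\tilde M/(\tilde M-1)$, and now the third constraint is tailored precisely so that both the cross-term absorption and the final bookkeeping go through.
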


\begin{proof}
Using substitutions \eqref{eq:replacements} the lower bound \eqref{eq:Xlowerbound} can be rewritten as
\begin{equation}\label{eq:Xlower2}
\widetilde{H}^{A(x),B(x)}_{\mathfrak{X}}[(u,v);(\zeta,\eta)] \geqslant \frac{1}{10} \Big(\frac{\tilde{M}}{\tilde{m}}\frac{\tilde{m}-1}{\tilde{M}-1}\Big)^{1/2} C_{p}(A,B)^{-1} |\zeta| |\eta|
\end{equation}
and it will be verified separately in the two regions.

\emph{Region $\mathcal{Y}_{\downarrow}$.}
In this region Lemma~\ref{lm:hessians} can be applied with
\[ P(t) = (1+\delta) \Phi(t) + \delta t^2 \int_0^t \frac{\Phi'(s)\,\textup{d}s}{s^2},\quad
Q(t) = (1+\delta) {\Phi_{\ast}}(t), \]
noting that
\begin{align*}
\frac{1}{2}\Big(P''(t)+\frac{P'(t)}{t}\Big) & = \Big(\frac{1}{2}+\delta\Big) \Phi''(t) + \Big(\frac{1}{2}+2\delta\Big) \frac{\Phi'(t)}{t} + 2\delta \int_0^t \frac{\Phi'(s)\,\textup{d}s}{s^2}, \\
\frac{1}{2}\Big(P''(t)-\frac{P'(t)}{t}\Big) & = \Big(\frac{1}{2}+\delta\Big) \Phi''(t) - \frac{1}{2} \frac{\Phi'(t)}{t}.
\end{align*}
That way we end up with
{\allowdisplaybreaks
\begin{align*}
& \widetilde{H}^{A(x),B(x)}_{\mathfrak{X}}[(u,v);(\zeta,\eta)] \\
& = \mathop{\textup{Re}}\Bigg\langle A(x)\zeta, \ \frac{1}{2}\Big(\Phi''(|u|)+\frac{\Phi'(|u|)}{|u|}\Big) \zeta + \frac{1}{2}\Big(\Phi''(|u|)-\frac{\Phi'(|u|)}{|u|}\Big) \overline{\zeta} \Bigg\rangle_{\mathbb{C}^d} \\
& \quad + (1+\delta) \mathop{\textup{Re}}\Bigg\langle B(x)\eta, \ \frac{1}{2}\Big(\Phi_{\ast}''(|v|)+\frac{\Phi_{\ast}'(|v|)}{|v|}\Big) \eta + \frac{1}{2}\Big(\Phi_{\ast}''(|v|)-\frac{\Phi_{\ast}'(|v|)}{|v|}\Big) \overline{\eta} \Bigg\rangle_{\mathbb{C}^d} \\
& \quad + 2\delta \Phi''(|u|) \mathop{\textup{Re}}\big\langle A(x)\zeta, \mathop{\textup{Re}}\zeta \big\rangle_{\mathbb{C}^d}
+ 2\delta \Big(\frac{\Phi'(|u|)}{|u|} + \int_0^{|u|} \frac{\Phi'(s)\,\textup{d}s}{s^2}\Big) \underbrace{\mathop{\textup{Re}}\big\langle A(x)\zeta, \zeta \big\rangle_{\mathbb{C}^d}}_{\geqslant0}.
\end{align*}
}
Definition \eqref{eq:condPhi1}, positivity of $\Phi',\Phi'',\Phi_{\ast}',\Phi_{\ast}''$, and the choice of $\delta$ estimate this from below as
{\allowdisplaybreaks
\begin{align*}
& \widetilde{H}^{A(x),B(x)}_{\mathfrak{X}}[(u,v);(\zeta,\eta)] \\
& \geqslant \frac{1}{2} \Delta_{\Phi}(A) \Big(\Phi''(|u|)+\frac{\Phi'(|u|)}{|u|}\Big)  |\zeta|^2 + \frac{1+\delta}{2} \Delta_{{\Phi_{\ast}}}(B) \Big(\Phi_{\ast}''(|v|)+\frac{\Phi_{\ast}'(|v|)}{|v|}\Big) |\eta|^2
- 2\delta \Lambda(A) \Phi''(|u|) |\zeta|^2 \\
& \geqslant \frac{1}{4}\Delta_{p}(A) \Phi''(|u|) |\zeta|^2
+ \frac{1}{2}\Delta_{p}(B) \Phi_{\ast}''(|v|) |\eta|^2 \\
& \geqslant \frac{1}{2} \big(\Delta_{p}(A)\Delta_{p}(B)\big)^{1/2} \big(\Phi''(|u|)\Phi_{\ast}''(|v|)\big)^{1/2} |\zeta| |\eta| \\
& \geqslant \frac{1}{2} C_{p}(A,B)^{-1} \big(\Phi''(|u|)\Phi_{\ast}''(|v|)\big)^{1/2} |\zeta| |\eta|.
\end{align*}
}
Recall that in $\mathcal{Y}_{\downarrow}$ we have $|u|>\Phi_{\ast}'(|v|)$.
For the proof of \eqref{eq:Xlower2} it remains to observe that, since $\Phi''$ is increasing by \eqref{eq:Phicond1},
\[ \Phi''(|u|)\Phi_{\ast}''(|v|) \geqslant \Phi''(\Phi_{\ast}'(|v|)) \Phi_{\ast}''(|v|) = (\Phi'\circ\Phi_{\ast}')'(|v|) = 1. \]

\emph{Region $\mathcal{Y}_{\uparrow}$.}
In this region Lemma~\ref{lm:hessians} applies with
\[ P(t) = \Phi(t),\quad
Q(t) = {\Phi_{\ast}}(t),\quad
R(t) = \delta \int_0^t \frac{\textup{d}s}{\Phi_{\ast}'(s)}. \]
Taking into account
\[ R'(t) = \frac{\delta}{\Phi_{\ast}'(t)},\quad R''(t) = \frac{-\delta\Phi_{\ast}''(t)}{\Phi_{\ast}'(t)^2} \]
that lemma gives
{\allowdisplaybreaks
\begin{align*}
& \widetilde{H}^{A(x),B(x)}_{\mathfrak{X}}[(u,v);(\zeta,\eta)] \\
& = \mathop{\textup{Re}}\Bigg\langle A(x)\zeta, \ \frac{1}{2}\Big(\Phi''(|u|)+\frac{\Phi'(|u|)}{|u|}\Big) \zeta + \frac{1}{2}\Big(\Phi''(|u|)-\frac{\Phi'(|u|)}{|u|}\Big) \overline{\zeta} \Bigg\rangle_{\mathbb{C}^d} \\
& \quad + \mathop{\textup{Re}}\Bigg\langle B(x)\eta, \ \frac{1}{2}\Big(\Phi_{\ast}''(|v|)+\frac{\Phi_{\ast}'(|v|)}{|v|}\Big) \eta + \frac{1}{2}\Big(\Phi_{\ast}''(|v|)-\frac{\Phi_{\ast}'(|v|)}{|v|}\Big) \overline{\eta} \Bigg\rangle_{\mathbb{C}^d} \\
& \quad + 2\delta \Big(\int_0^{|v|} \frac{\textup{d}s}{\Phi_{\ast}'(s)}\Big) \mathop{\textup{Re}}\big\langle A(x)\zeta, \zeta \big\rangle_{\mathbb{C}^d} \\
& \quad + 2\delta \frac{|u|}{\Phi_{\ast}'(|v|)} \mathop{\textup{Re}}\big\langle A(x)\zeta, \mathop{\textup{Re}}\eta  \big\rangle_{\mathbb{C}^d}
+ 2\delta \frac{|u|}{\Phi_{\ast}'(|v|)} \mathop{\textup{Re}}\big\langle B(x)\eta, \mathop{\textup{Re}}\zeta \big\rangle_{\mathbb{C}^d} \\
& \quad - \delta \frac{|u|^2 \Phi_{\ast}''(|v|)}{\Phi_{\ast}'(|v|)^2} \mathop{\textup{Re}}\big\langle B(x)\eta, \mathop{\textup{Re}}\eta \big\rangle_{\mathbb{C}^d}
+ \delta \frac{|u|^2}{|v|\Phi_{\ast}'(|v|)} \mathop{\textup{Re}}\big\langle B(x)\eta, i\mathop{\textup{Im}}\eta \big\rangle_{\mathbb{C}^d}.
\end{align*}
}
Noting $\Phi',\Phi'',\Phi_{\ast}',\Phi_{\ast}''>0$ we see that the last expression is at least
{\allowdisplaybreaks
\begin{align*}
& \frac{1}{2} \Delta_{\Phi}(A) \Big( \Phi''(|u|) + \frac{\Phi'(|u|)}{|u|} \Big) |\zeta|^2
+ \frac{1}{2} \Delta_{{\Phi_{\ast}}}(B) \Big( \Phi_{\ast}''(|v|) + \frac{\Phi_{\ast}'(|v|)}{|v|} \Big) |\eta|^2 \\
& + 2\delta \lambda(A) \Big(\int_0^{|v|} \frac{\textup{d}s}{\Phi_{\ast}'(s)}\Big) |\zeta|^2
- 2\delta \big(\Lambda(A)+\Lambda(B)\big) \frac{|u|}{\Phi_{\ast}'(|v|)} |\zeta| |\eta| \\
& - \delta \Lambda(B) \Big(\frac{|u|^2\Phi_{\ast}''(|v|)}{\Phi_{\ast}'(|v|)^2} + \frac{|u|^2}{|v|\Phi_{\ast}'(|v|)}\Big) |\eta|^2.
\end{align*}
}
We can disregard the first term as nonnegative.
Since in $\mathcal{Y}_{\uparrow}$ we have $|u|<\Phi_{\ast}'(|v|)$, this whole expression is, in turn, bounded from below by
\[ 2\delta \lambda(A) \Big(\int_0^{|v|} \frac{\textup{d}s}{\Phi_{\ast}'(s)}\Big) |\zeta|^2
+ \Big(\frac{1}{2}\Delta_{p}(B) - \delta\Lambda(B)\Big) \Big( \Phi_{\ast}''(|v|) + \frac{\Phi_{\ast}'(|v|)}{|v|} \Big) |\eta|^2
- 2\delta \big(\Lambda(A)+\Lambda(B)\big) |\zeta| |\eta|. \]
The last display can be viewed as a quadratic form in $|\zeta|$ and $|\eta|$, and it is at least
\begin{align*}
& \Big(2\delta\lambda(A)\Delta_{p}(B) \frac{\Phi_{\ast}'(|v|)}{|v|} \int_0^{|v|} \frac{\textup{d}s}{\Phi_{\ast}'(s)}\Big)^{1/2} |\zeta| |\eta| - 2\delta \big(\Lambda(A)+\Lambda(B)\big) |\zeta| |\eta| \\
& \geqslant \Big(\delta\lambda(A)\Delta_{p}(B) \frac{\tilde{M}}{\tilde{M}-1}\Big)^{1/2} |\zeta| |\eta|
\geqslant \frac{1}{10} \Big(\frac{\tilde{m}-1}{\tilde{m}}\frac{\tilde{M}}{\tilde{M}-1}\Big)^{1/2} C_{p}(A,B)^{-1} |\zeta| |\eta|,
\end{align*}
where we also used \eqref{eq:intPsiupper} and the fact that $\delta$ was given by \eqref{eq:choiceofdelta}.
This proves \eqref{eq:Xlower2} again.
\end{proof}

\begin{remark}\label{rem:Bellmannmotivation}
Let us finalize this section with a short reasoning that can lead naturally to formula \eqref{eq:mainBellman} for the Bellman function $\mathfrak{X}(u,v)$.
First, the main terms $\Phi(|u|)$ and $\Phi_{\ast}(|v|)$ need to be included, as they are responsible for the generalized convexity, as we have already remarked.
In Lemma~\ref{lm:Xlower} we need a concrete quantitative lower bound on the generalized Hessian of $\mathfrak{X}$ and these two terms would be sufficient in the region $\mathcal{Y}_{\downarrow}$.
However, in the region $\mathcal{Y}_{\uparrow}$ we compensate convexity with a ``hidden term'' (in the language of \cite[Section~8.2]{NazarovTreil96}) of the form $\delta |u|^2 R(|v|)$ for a sufficiently small $\delta>0$.
In order to still have the upper bound of Lemma~\ref{lm:Xupper}, the best we can do (in complete generality) in the region $\mathcal{Y}_{\uparrow}$ is to estimate $|u|^2 R(|v|) \leq \Phi'_{\ast}(|v|)^2 R(|v|)$.
We would like this to be controlled by $\Phi_{\ast}(|v|)$, so it makes sense to take $R(|v|)$ to be roughly $\Phi_{\ast}(|v|)/\Phi'_{\ast}(|v|)^2$.
However, the last expression is not a particularly good choice, as its convexity would also need to be characterized by properties of the third derivative $\Phi'''_{\ast}$.
Luckily, the assumptions from Subsection~\ref{subsec:Orlicz} offer many possibilities for quantities that are comparable to $\Phi_{\ast}/(\Phi'_{\ast})^2$ and the most convenient choice is the primitive function of $1/\Phi'_{\ast}$, leading to
$R(|v|)=\int_{0}^{|v|}1/\Phi'_{\ast}$.
It is important that $\mathfrak{X}$ is $\textup{C}^1$, so its definitions on $\mathcal{Y}_{\downarrow}$ and $\mathcal{Y}_{\uparrow}$ need to match nicely. For this reason, computation \eqref{eq:integralcomput} then reveals us how to complete its formula on $\mathcal{Y}_{\downarrow}$.
Note that the term $\delta|u|^2\int_{0}^{|u|}\Phi'(s)\,\textup{d}s/s^2$ is particularly interesting, as it was not visible in the classical special case \eqref{eq:particularBellman}.
\end{remark}

\section{Proof of Theorem~\ref{thm:mainthm}}
\label{sec:proofofthm}
As discussed in Section \ref{subsec:formulationresults}, to prove Theorem~\ref{thm:mainthm} it is enough to establish the dehomogenized estimate \eqref{eq:bilinorl2} from Remark~\ref{rem:dehomogenization}. In its proof we will use the heat flow method and closely follow the outline by Carbonaro and Dragi\v{c}evi\'{c} \cite[Section 6]{CarbonaroDragicevic20} (also see \cite[Section 6]{CarbonaroDragicevicKovacSkreb21}). We will be very brief because what follows is a straightforward adaptation of their arguments. On the other hand, we still include a few details to indicate how certain formulas generalize from powers to Young functions $\Phi$ and ${\Phi_{\ast}}$.

\subsection{Regularization}
\label{subsec:regularization}
In the proof of Theorem~\ref{thm:mainthm} we will need a smoother version of the constructed Bellman function \eqref{eq:mainBellman}. To be more precise, we want to replace $\mathfrak{X}$ by a function that satisfies similar properties to those in Lemmas~\ref{lm:Xupper}--\ref{lm:Xlower} but is, in addition, also of class $\textup{C}^\infty$ everywhere on $\mathbb{C}^2$, and not only in $(\mathbb{C}\setminus\{0\})^2\setminus\mathcal{Y}$. Mollification of the Bellman function for this purpose has already been employed in \cite{PetermichlVolberg02} and \cite{NazarovVolberg03}. In a similar context, this ``regularization'' was performed in almost exactly the same way in \cite[Subsection~5.1]{CarbonaroDragicevic20}.

Let us fix a nonnegative radial $\textup{C}^\infty$ function $\varphi$ on $\mathbb{C}^2\equiv\mathbb{R}^4$, supported in the standard unit ball, and such that $\int_{\mathbb{C}^2}\varphi=1$. For a given $\nu\in(0,1]$ and $(w,z)\in\mathbb{C}^2$ we define $\varphi_\nu(w,z) := \nu^{-4} \varphi(w/\nu,z/\nu)$. Note that $\varphi_\nu$ are $\textup{L}^1$-normalized dilates of $\varphi$. Consider
\[ \mathfrak{X}_{\nu}:=\mathfrak{X}\ast\varphi_\nu, \]
i.e.,
\[ \mathfrak{X}_{\nu}(u,v) :=\int_{\mathbb{C}^2} \mathfrak{X}(u-w,v-z) \varphi_{\nu}(w,z) \,\textup{d}w \,\textup{d}z \]
for $(u,v)\in\mathbb{C}^2$.
Here $\textup{d}w$ and $\textup{d}z$ denote integration with respect to the two-dimensional Lebesgue measure on $\mathbb{C}\equiv\mathbb{R}^2$; it should not be confused with complex integration.
Clearly, $\mathfrak{X}_\nu$ is of class $\textup{C}^\infty$ on the whole $\mathbb{C}^2$, since it is a convolution of $\mathfrak{X}$ with a smooth function.
We still consider fixed $\Phi,{\Phi_{\ast}},A,B$ that satisfy hypotheses of Theorem~\ref{thm:mainthm}.

\begin{proposition}
\label{prop:Xmollified}
\begin{enumerate}[(a)]
\item \label{eq:Xmoll1}
For $\nu\in(0,1]$ and $(u,v)\in\mathbb{C}^2$ we have:
\[ 0 \leqslant \mathfrak{X}_\nu(u,v) \leqslant 2\max\Big\{1,\frac{M}{\tilde{m}}\Big\}\big(\Phi(|u|+\nu)+ {\Phi_{\ast}}(|v|+\nu)\big). \]
\item \label{eq:Xmoll2}
For $\nu\in(0,1]$ and $(u,v)\in\mathbb{C}^2$ we have:
\begin{align*}
\big|\partial_{\bar{u}}\mathfrak{X}_{\nu}(u,v)\big| & \leqslant \max\big\{\Phi'(|u|+\nu),|v|+\nu\big\}, \\
\big|\partial_{\bar{v}}\mathfrak{X}_{\nu}(u,v)\big| & \leqslant \Phi_{\ast}'(|v|+\nu).
\end{align*}
\item \label{eq:Xmoll3}
For $\nu\in(0,1]$, $x\in\mathbb{R}^d$, $(u,v)\in\mathbb{C}^2$, and $(\zeta,\eta)\in(\mathbb{C}^d)^2$ we have:
\[ H^{A(x),B(x)}_{\mathfrak{X}_\nu}[(u,v);(\zeta,\eta)] \geqslant \frac{1}{10} \Big(\frac{\tilde{M}}{\tilde{m}}\frac{\tilde{m}-1}{\tilde{M}-1}\Big)^{1/2} C_{p}(A,B)^{-1} |\zeta| |\eta|. \]
\end{enumerate}
\end{proposition}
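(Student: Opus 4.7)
The plan is to prove all three parts as direct consequences of the corresponding estimates for $\mathfrak{X}$ established in Lemmae~\ref{lm:Xupper}--\ref{lm:Xlower}, pushed through the convolution with the smooth mollifier $\varphi_{\nu}$. Throughout, I will use crucially that $\varphi_{\nu}\geqslant0$, $\int_{\mathbb{C}^2}\varphi_{\nu}=1$, and $\mathop{\textup{supp}}\varphi_{\nu}\subseteq\{(w,z)\in\mathbb{C}^2:|w|^2+|z|^2\leqslant\nu^2\}$, which in particular gives $|w|\leqslant\nu$ and $|z|\leqslant\nu$ on the support.

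\textbf{Part (a).} Nonnegativity of $\mathfrak{X}_{\nu}$ follows from $\mathfrak{X}\geqslant0$. For the upper bound I will insert \eqref{eq:Xupperbound} under the integral sign and use $|u-w|\leqslant|u|+\nu$, $|v-z|\leqslant|v|+\nu$, together with the fact that $\Phi$ and $\Psi$ are increasing, to obtain
\[ \mathfrak{X}_{\nu}(u,v) \leqslant 2\max\{1,M/\tilde{m}\} \int_{\mathbb{C}^2}\big(\Phi(|u|+\nu)+\Psi(|v|+\nu)\big)\varphi_{\nu}(w,z)\,\textup{d}w\,\textup{d}z, \]
which equals the right-hand side since $\varphi_{\nu}$ integrates to $1$.

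\textbf{Part (b).} Since $\mathfrak{X}\in\textup{C}^1(\mathbb{C}^2)$ by Lemma~\ref{lm:XisC1}, convolution commutes with $\partial_{\bar u}$ and $\partial_{\bar v}$, giving $\partial_{\bar u}\mathfrak{X}_{\nu}=(\partial_{\bar u}\mathfrak{X})\ast\varphi_{\nu}$ and analogously for $\partial_{\bar v}$. Applying Lemma~\ref{lm:Xderiv} pointwise under the integral, and using the monotonicity of $\Phi'$ (and of $t\mapsto t$) together with the support of $\varphi_{\nu}$, yields both derivative bounds.

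\textbf{Part (c).} This is the main obstacle, because Lemma~\ref{lm:Xlower} is only available on the open set $(\mathbb{C}\setminus\{0\})^2\setminus\mathcal{Y}$, while $\mathfrak{X}$ fails to be $\textup{C}^2$ on the complement. My strategy is to exploit the second half of Lemma~\ref{lm:XisC1}: the classical second-order partial derivatives of $\mathfrak{X}$ exist off a set of Lebesgue measure zero and are locally integrable on $\mathbb{C}^2\equiv\mathbb{R}^4$. Therefore, the distributional second derivatives of $\mathfrak{X}$ coincide with the classical a.e.\@ ones, and since $\mathfrak{X}_{\nu}=\mathfrak{X}\ast\varphi_{\nu}$ is smooth, the standard identity
\[ \partial_i\partial_j\mathfrak{X}_{\nu}(u,v)=\int_{\mathbb{C}^2}(\partial_i\partial_j\mathfrak{X})(u-w,v-z)\,\varphi_{\nu}(w,z)\,\textup{d}w\,\textup{d}z \]
holds classically for every choice of real coordinates $i,j\in\{u_r,u_i,v_r,v_i\}$. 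Because the generalized Hessian $H^{A(x),B(x)}_{\mathfrak{X}}[(u,v);(\zeta,\eta)]$ is \emph{linear} in the entries of $\textup{Hess}(\mathfrak{X};(u,v))$, it follows that
\[ H^{A(x),B(x)}_{\mathfrak{X}_{\nu}}[(u,v);(\zeta,\eta)] = \int_{\mathbb{C}^2} H^{A(x),B(x)}_{\mathfrak{X}}[(u-w,v-z);(\zeta,\eta)]\,\varphi_{\nu}(w,z)\,\textup{d}w\,\textup{d}z. \]
For almost every $(w,z)$ in the support of $\varphi_{\nu}$, the point $(u-w,v-z)$ lies in $(\mathbb{C}\setminus\{0\})^2\setminus\mathcal{Y}$, so Lemma~\ref{lm:Xlower} applies under the integral and the bound is uniform in $(w,z)$; integrating against the probability density $\varphi_{\nu}$ preserves it and delivers the claim. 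The only subtlety is the justification of the commutation identity above on the measure-zero singular set, and this is precisely what the local integrability asserted in Lemma~\ref{lm:XisC1} ensures.
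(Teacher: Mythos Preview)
Your proposal is correct and follows essentially the same approach as the paper: all three parts are obtained by pushing the pointwise bounds of Lemmae~\ref{lm:Xupper}--\ref{lm:Xlower} through the convolution with the nonnegative, $\textup{L}^1$-normalized mollifier supported in the $\nu$-ball, using monotonicity of $\Phi,\Psi,\Phi',\Psi'$ for (a) and (b), and the local integrability of the second-order derivatives from Lemma~\ref{lm:XisC1} to justify the commutation identity for the generalized Hessian in (c). The paper additionally cites two standard references to justify that the distributional and a.e.\ classical second derivatives of $\mathfrak{X}$ agree, which is exactly the ``subtlety'' you flag in your last sentence.
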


\begin{proof}
\emph{Estimate \eqref{eq:Xmoll1}.}
By the definition of $\mathfrak{X}_\nu$ and estimate \eqref{eq:Xupperbound} from Lemma~\ref{lm:Xupper} we easily get
\begin{align*}
\mathfrak{X}_\nu(u,v)
& \leqslant 2\max\Big\{1,\frac{M}{\tilde{m}}\Big\}\int_{\mathbb{C}^2} \big(\Phi(|u-w|)+{\Phi_{\ast}}(|v-z|)\big) \varphi_\nu(w,z) \, \textup{d}w \,\textup{d}z \\
& \leqslant 2\max\Big\{1,\frac{M}{\tilde{m}}\Big\} \big(\Phi(|u|+\nu)+{\Phi_{\ast}}(|v|+\nu)\big).
\end{align*}
Here, in the last inequality, we used $\int_{\mathbb{C}^2} \varphi_\nu=1$ and that for $(w,z)$ in the support of $\varphi_\nu$ we have $|u-w|\leqslant |u|+\nu$ and $|v-z|\leqslant |v|+\nu$, while $\Phi$ and ${\Phi_{\ast}}$ are increasing.

\emph{Estimates \eqref{eq:Xmoll2}.}
Recall that Lemma~\ref{lm:XisC1} guarantees that the first-order partial derivatives of $\mathfrak{X}$ are continuous.
By Lemma~\ref{lm:Xderiv} we have
\begin{align*}
\big|\partial_{\bar{u}}\mathfrak{X}_{\nu}(u,v)\big|
& = \Big|\int_{\mathbb{C}^2}\partial_{\bar{u}}\mathfrak{X}(u-w,v-z)\varphi_\nu(w,z) \,\textup{d}w \,\textup{d}z\Big| \\
& \leqslant \int_{\mathbb{C}^2}\max\big\{\Phi'(|u-w|), |v-z|\big\} \,\varphi_\nu(w,z) \,\textup{d}w \,\textup{d}z \\
& \leqslant \max\left\{\Phi'(|u|+\nu), |v|+\nu\right\}
\end{align*}
and analogously
\[ \big|\partial_{\bar{v}}\mathfrak{X}_{\nu}(u,v)\big| \leqslant \Phi_{\ast}'(|v|+\nu). \]
Here we used that $\Phi'$ and $\Phi_{\ast}'$ are increasing too; recall \eqref{eq:bijections}.

\emph{Estimate \eqref{eq:Xmoll3}.}
Lemma~\ref{lm:XisC1} also guarantees that the second-order derivatives of $\mathfrak{X}$ are locally integrable functions on $\mathbb{R}^4$ defined on the complement of the critical surface and coordinate hyperplanes.
Combining classical results \cite[Theorem~6.3.11]{Cohn13} and \cite[Theorem~2.1]{Hebey99} we see that the second-order partial derivatives of $\mathfrak{X}$ can equally well be computed in the weak (i.e., distributional) sense. In particular, the generalized Hessian of $\mathfrak{X}\ast\varphi_{\nu}$ is the convolution of the generalized Hessian of $\mathfrak{X}$ and $\varphi_{\nu}$. The former exists almost everywhere and satisfies the bound \eqref{eq:Xlowerbound} from Lemma~\ref{lm:Xlower} at those points.
Therefore, we still have
\begin{align*}
H^{A(x),B(x)}_{\mathfrak{X}_\nu}[(u,v);(\zeta,\eta)]
& = \int_{\mathbb{C}^2} H^{A(x),B(x)}_{\mathfrak{X}}[(u-w,v-z);(\zeta,\eta)] \,\varphi_\nu(w,z) \,\textup{d}w \,\textup{d}z \\
& \geqslant \frac{1}{10} \Big(\frac{\tilde{M}}{\tilde{m}}\frac{\tilde{m}-1}{\tilde{M}-1}\Big)^{1/2} C_{p}(A,B)^{-1} |\zeta| |\eta|. \qedhere
\end{align*}
\end{proof}

\subsection{Proof for smooth matrix functions}
First, we assume that the entries of $A$ and $B$ are bounded $\textup{C}^1$ functions that also have bounded derivatives. In addition to the previously fixed $\Phi,{\Phi_{\ast}},A,B$, now we also take $f,g\in\textup{C}_{c}^{\infty}(\mathbb{R}^d)$. Let us choose any radial $\textup{C}^\infty$ function $\psi\colon\mathbb{R}^d\to[0,1]$ that is constantly $1$ on the standard unit ball, while vanishing on its double dilate around the origin.
This time we normalize dilates of $\psi$ in $\textup{L}^\infty$ norm and write $\psi_R(x) := \psi(x/R)$ for any $R\in(0,\infty)$ and $x\in\mathbb{R}^d$.
Finally, for each $\nu\in(0,1]$ recall the mollified Bellman function $\mathfrak{X}_\nu$ from Subsection~\ref{subsec:regularization}.

Just as in \cite{CarbonaroDragicevic20}, for given $R\in(0,\infty)$ and $\nu\in(0,1]$ we define $\mathcal{E}_{R,\nu}\colon[0,\infty)\to [0,\infty)$ as
\[ \mathcal{E}_{R,\nu}(t) := \int_{\mathbb{R}^{d}} \psi_R(x) \,\mathfrak{X}_{\nu}\big((T_{t}^{A}f)(x),(T_{t}^{B}g)(x)\big) \,\textup{d}x. \]
The following manipulations were justified in \cite[Section 3.1]{CarbonaroDragicevic20} and \cite[Section 4.1]{CarbonaroDragicevic20}.
Proposition~\ref{prop:Xmollified}\,(\ref{eq:Xmoll1}) gives an upper bound on the following integral for a fixed time $T\in(0,\infty)$:
{\allowdisplaybreaks
\begin{align*}
-\int_{0}^{T} \mathcal{E}_{R,\nu}'(t) \,\textup{d}t
& = \mathcal{E}_{R,\nu}(0) - \mathcal{E}_{R,\nu}(T)
\leqslant \mathcal{E}_{R,\nu}(0) \\
& =\int_{\mathbb{R}^d} \psi_R(x) \,\mathfrak{X}_{\nu}\big(f(x),g(x)\big) \,\textup{d}x \\
& \leqslant 2\max\Big\{1,\frac{M}{\tilde{m}}\Big\}\int_{\mathbb{R}^d} \psi_R(x) \,\big( \Phi(|f(x)|+\nu)+{\Phi_{\ast}}(|g(x)|+\nu) \big) \,\textup{d}x.
\end{align*}
}
On the other hand, \cite[Proposition 4.3]{CarbonaroDragicevic20} and Proposition~\ref{prop:Xmollified}\,(\ref{eq:Xmoll3}) give a lower bound on the same integral:
{\allowdisplaybreaks
\begin{align*}
& -\int_{0}^{T} \mathcal{E}_{R,\nu}'(t) \,\textup{d}t
= -\int_{0}^{T} \int_{\mathbb{R}^{d}} \psi_R(x) \,\frac{\partial}{\partial t}\mathfrak{X}_{\nu}\big((T_{t}^{A}f)(x),(T_{t}^{B}g)(x)\big) \,\textup{d}x \,\textup{d}t \\
& = \int_{0}^{T} \int_{\mathbb{R}^d} \psi_R(x) \, H^{A(x),B(x)}_{\mathfrak{X}_\nu}\big[\big((T_{t}^{A}f)(x),(T_{t}^{B}g)(x)\big);\big((\nabla T_{t}^{A}f)(x),(\nabla T_{t}^{B}g)(x)\big)\big] \,\textup{d}x \,\textup{d}t + \mathcal{R}_{T,R,\nu} \\
& \geqslant \frac{1}{10} \Big(\frac{\tilde{M}}{\tilde{m}}\frac{\tilde{m}-1}{\tilde{M}-1}\Big)^{1/2} C_{p}(A,B)^{-1} \int_{0}^{T} \int_{\mathbb{R}^d} \psi_R(x) \,\big|(\nabla T_{t}^{A}f)(x)\big| \,\big|(\nabla T_{t}^{B}g)(x)\big| \,\textup{d}x \,\textup{d}t + \mathcal{R}_{T,R,\nu},
\end{align*}
}
where $\mathcal{R}_{T,R,\nu}$ is the remainder (a.k.a.\@ the \emph{error-term} in \cite{CarbonaroDragicevic20}), given as
\begin{align*}
\mathcal{R}_{T,R,\nu} := 2\mathop{\textup{Re}}
\int_{0}^{T} \int_{\mathbb{R}^d}
\Big( & (\partial_{\bar{u}} \mathfrak{X}_{\nu})\big((T_{t}^{A}f)(x),(T_{t}^{B}g)(x)\big) \,\big\langle(\nabla\psi_R)(x), A(x)(\nabla T_{t}^{A} f)(x) \big\rangle_{\mathbb{C}^{d}} \\
& + (\partial_{\bar{v}} \mathfrak{X}_{\nu})\big((T_{t}^{A}f)(x),(T_{t}^{B}g)(x)\big) \,\big\langle (\nabla\psi_R)(x), B(x)(\nabla T_{t}^{B} g)(x) \big\rangle_{\mathbb{C}^{d}}\Big) \,\textup{d}x \,\textup{d}t.
\end{align*}
Proposition~\ref{prop:Xmollified}\,(\ref{eq:Xmoll2}) controls this remainder as
{\allowdisplaybreaks
\begin{align*}
|\mathcal{R}_{T,R,\nu}|
& \leqslant 2 \Lambda(A) \int_{0}^{T} \int_{\mathbb{R}^d}|\nabla\psi_R(x)| \,\Phi'\big(|(T_t^A f)(x)|+\nu\big) \,|(\nabla T_t^A f)(x)| \,\textup{d}x \,\textup{d}t \\
& + 2 \Lambda(A) \int_{0}^{T} \int_{\mathbb{R}^d}|\nabla\psi_R(x)| \,\big(|(T_t^B g)(x)|+\nu\big) \,|(\nabla T_t^A f)(x)| \,\textup{d}x \,\textup{d}t \\
& + 2 \Lambda(B) \int_{0}^{T} \int_{\mathbb{R}^d}|\nabla\psi_R(x)| \,\Phi_{\ast}'\big(|(T_t^B g)(x)|+\nu\big) \,|(\nabla T_t^B g)(x)| \,\textup{d}x \,\textup{d}t.
\end{align*}
}
By reasoning as in the proof of \cite[Lemma 6.1]{CarbonaroDragicevic20}, semigroup $\textup{L}^\infty$ estimates and Davies-Gaffney-type estimates now easily show
\[ \limsup_{R\to\infty} \limsup_{\nu\to0^+} |\mathcal{R}_{T,R,\nu}| = 0 \]
for any fixed $T\in(0,\infty)$.
Thus, we first let $\nu\to0^+$ and then send $R\to\infty$, both in the upper estimate and in the lower estimate above.
Combining the two immediately gives us
\begin{align*}
\frac{1}{10} \Big(\frac{\tilde{M}}{\tilde{m}}\frac{\tilde{m}-1}{\tilde{M}-1}\Big)^{1/2} C_{p}(A,B)^{-1} \int_{0}^{T} \int_{\mathbb{R}^d} \big|(\nabla T_{t}^{A}f)(x)\big| \,\big|(\nabla T_{t}^{B}g)(x)\big| \,\textup{d}x \,\textup{d}t & \\
\leqslant 2\max\Big\{1,\frac{M}{\tilde{m}}\Big\}\int_{\mathbb{R}^d} \big( \Phi(|f(x)|)+{\Phi_{\ast}}(|g(x)|) \big) \,\textup{d}x & .
\end{align*}
In the limit as $T\to\infty$ we obtain precisely \eqref{eq:bilinorl2}.

\subsection{Proof for non-smooth matrix functions}
Extension of the estimate \eqref{eq:bilinorl2} to arbitrary $A$ and $B$ is performed exactly as in \cite{CarbonaroDragicevic20}. In \cite[Appendix]{CarbonaroDragicevic20} the authors define smooth approximations $A_\varepsilon$ and $B_\varepsilon$ such that $\nabla T_t^{A_\varepsilon}f \to \nabla T_t^{A}f$ in the $\textup{L}^2$ norm as $\varepsilon\to0^+$, $\lambda(A)\leqslant\lambda(A_\varepsilon)\leqslant\Lambda(A_\varepsilon)\leqslant\Lambda(A)$, $\Delta_p(A_\varepsilon)\geqslant\Delta_p(A)$, etc. The proof is then finalized as in \cite[Section~6]{CarbonaroDragicevic20}, by applying the previously established smooth case and letting $\varepsilon\to0^+$.

%%%%%%%%%%%%%%%%%%%%%%%%%%%%%%%%%%%%%%%%%%%%%%%%

\section*{Acknowledgements}
This work was supported in part by the \emph{Croatian Science Foundation} project UIP-2017-05-4129 (MUNHANAP).
The authors are grateful to Oliver Dragi\v{c}evi\'{c} for introducing them to elliptic operators with non-smooth complex coefficients during his stay at the University of Zagreb in the Spring of 2019.
The authors would also like to thank the anonymous referee for numerous excellent remarks that have greatly improved readability of the text.

%%%%%%%%%%%%%%%%%%%%%%%%%%%%%%%%%%%%%%%%%%%%%%%%

\bibliography{bilinear_div_Orlicz}{}
\bibliographystyle{plain}

\end{document}